\theoremstyle{definition}
\newtheorem{lem}{Lemma}[section]
\newtheorem{cor}[lem]{Corollary}
\newtheorem{prop}[lem]{Proposition}
\newtheorem{thm}[lem]{Theorem}
\newtheorem{conj}[lem]{Conjecture}
\newtheorem{defn}[lem]{Definition}
\newtheorem{rem}[lem]{Remark}
\renewcommand{\geq}{\geqslant}
\renewcommand{\leq}{\leqslant}
\numberwithin{equation}{section}
\begin{document}
\author{Jason Boynton}
\address{Department of Mathematics\\
	North Dakota State University\\
	Fargo, ND 58103}
\email[J.~Boynton]{jason.boynton@ndsu.edu}
\keywords{Half-factorial, HFD, orders, factorization}
\subjclass[2010]{Primary: 13F15, 13A05, 13B22}
\author{Jim Coykendall}
\address{School of Mathematical and Statistical Sciences\\
	Clemson University\\
	Clemson, SC 29634}
\email[J.~Coykendall]{jcoyken@clemson.edu}
\author{Grant Moles}
\address{Mathematics Department\\
	Union College\\
	Schenectady, NY 12308}
\email[G.~Moles]{molesg@union.edu}

\author{Chelsey Morrow}
\address{Mathematics Department\\
	Culver-Stockton College\\
	Canton, MO 63435}
\email[C.~Morrow]{cmorrow@culver.edu}

\title{Overrings of Half-Factorial Orders}
\begin{abstract}
The behavior of factorization properties in various ring extensions is a central theme in commutative algebra. Classically, the UFDs are (completely) integrally closed and tend to behave well in standard ring extensions, with the notable exception of power series extension. The half-factorial property is not as robust; HFDs need not be integrally closed and the half-factorial property is not necessarily preserved in integral extensions or even localizations. Here we exhibit classes of HFDs that behave well in (almost) integral extensions, resolve an open question on the behavior of the boundary map, and give a squeeze theorem for elasticity in certain domains.
\end{abstract}

\maketitle
\sloppy

\section{Introduction}

In the theory of factorization, the notion of ``half-factorial domain'' (HFD) may be second only to the unique factorization property with regard to prominence. The standard definition of unique factorization domain describes a domain which is atomic (every nonzero nonunit can be factored into a finite product of irreducible elements) and each nonzero nonunit of $R$ factors uniquely into prime elements. A half-factorial domain is the ``second best'' factorization type from the perspective that all factorizations of an element have the same length. More precisely, we recall that $R$ is an HFD if $R$ is atomic and given two factorizations

\[
\alpha_1\alpha_2\cdots\alpha_n=\beta_1\beta_2\cdots\beta_m
\]

\noindent with each $\alpha_i, \beta_j$ irreducible in $R$, then $n=m$.

 The genesis of the notion of half-factoriality stems from a paper of L. Carlitz (\cite{carlitz}) in which it is shown that if $R$ is the ring of integers in an algebraic number field (the integral closure of $\mathbb{Z}$ in a finite extension of $\mathbb{Q}$), then $R$ is an HFD if and only if the class number of $R$ does not exceed $2$. So in the case of rings of algebraic integers, the class number characterizes HFDs and also neatly divides this collection of HFDs into two nice pieces: if $R$ has class number $1$, then $R$ is a UFD, and if $R$ has class number $2$ then $R$ is an HFD that is not a UFD.
 
 For the half-factorial property, rings of integers have proved to be a useful arena in which the half-factorial property is at the peak of its power and mimics unique factorization in strength. Indeed, if $R$ is a ring of integers that has the HFD property, then this property persists in localizations (see, for example \cite[Corollary 6]{Z}) as well as polynomial (\cite{Co1} or \cite{Z1980}) or power series (\cite{MolesPhD} or \cite{Co2005}) extensions in a finite number of variables, mirroring the behavior of UFDs. One can argue that the lockstep duality of stability properties shared by UFDs and HFDs in the setting of rings of algebraic integers comes from the fact, highlighted first by Carlitz \cite{carlitz}, that both of these classes of domains possess an ideal-theoretic characterization (that is, the class number of $R$ is equal $1$ in the UFD case and is less than or equal to $2$ in the HFD case).
 
 We recall that an order $R$ in a ring of algebraic integers is a subring whose integral closure $\overline{R}$ is equal to the full ring of integers. In the more general setting of orders, we often lose the strong ``integrally closed'' condition reserved to the full ring integers. Nonetheless, the class of orders in rings of integers is still a (comparatively) nice environment to study the half-factorial property; orders in rings of integers enjoy the properties of being Noetherian and $1-$dimensional, and so they have Dedekind integral closure. What is more, although not all ideals in an order are invertible if the order is not integrally closed, all but finitely many prime ideals of such an order are invertible and there is an analog of the class group (see, for example, \cite{hcohn}) that is a useful tool.
 
 It is shown in \cite{Co2} that if $R$ is an order that is an HFD, then $\overline{R}$, the integral closure of $R$, is also an HFD, but the analog of this question is not well understood far outside the confines of rings of integers. In fact, it is shown in \cite{Co4} that this integral closure of an HFD need not be atomic (and hence the integral closure of an HFD need not be an HFD). In \cite{R3} that even if the integral closure of an HFD is still atomic, it still need not be an HFD. Even in the case of orders, if $R$ is an HFD, it is not known if intermediate rings (between $R$ and its integral closure) still have the half-factorial property. We will address this problem in the case of radical conductor in this work as well.
 
 In general ring extensions, many limitations persist. For example, if $R$ is an order in a ring of integers then $R[x]$ is an HFD if and only if $R$ is an integrally closed HFD (see \cite{Z1980} and \cite{Co1}). It is interesting to note that the integrally closed condition is not necessary for the half-factorial property to be preserved in the power series extension. For example, the domain $\mathbb{Z}[\sqrt{-3}]$ is an HFD as is $\mathbb{Z}[\sqrt{-3}][[x]]$, but $\mathbb{Z}[\sqrt{-3}][x]$ is not (\cite[Theorem 6.2]{Co2005}).
 
 Of the classical ``domains of factorization type'' studied by Anderson, Anderson, and Zafrullah in \cite{AAZ}, only the classes of atomic domains and half-factorial domains are unstable under adjunction of a polynomial indeterminate, and this underscores the fact that although HFDs are pretty far up the factorization food chain, it is still an intrinsically unstable class of domains.
 
 These observations give a strong motivation to begin any study of HFD stability by restricting to overrings, and in this setting there is a tool called the boundary map. The boundary map is a function that generalizes the notion of the length function discussed in \cite{AAZ}. Formally, if $R$ is an HFD with quotient field $K$, then the boundary function is a function from the group of nonzero elements of $K$ to the additive group $\mathbb{Z}$ implemented as follows. If $x\in K^*$ we write $x=\frac{a}{b}$, with $a,b\in R\setminus\{0\}$. Further, we factor $a$ and $b$ into irreducibles $a=\pi_1\pi_2\cdots \pi_n$ and $b=\xi_1\xi_2\cdots\xi_m$ and define
 
 \[
 \partial_R(x)=n-m.
 \]

The well-definedness of this assignment is a direct consequence of the fact that $R$ is an HFD.

The boundary map has been used with some success, especially in the arena of orders in rings of integers (see \cite{Co2} for example) and more generally it has been used in investigating overrings of HFDs in which there is some hope of good behavior (for example, (almost) integral extensions). The interested reader should consult \cite{GP2005}, \cite{M2006}, and \cite{M04}, among others.

Almost integral extensions of an HFD are where the boundary map has found the most utility because almost integral elements over an HFD have the property that the boundary is nonnegative 
(\cite{Co2}). But still, the possibility of overrings with nonunits of boundary $0$ is problematic as 
such elements go ``undetected'' by the boundary map. In \cite{Co4}, an example of a nonunit of 
boundary $0$ in an integral extension was given and it was conjectured that this cannot happen if 
the integral extension is atomic; however, in \cite[Example 1.14]{GP2005} this was shown to be false in general. In general, the problem as to when integral atomic extensions can admit a nonunit element of boundary $0$ is still unknown, but in this work, we show that if $R$ is an HFD order in a ring of integers then no integral extension can have a nonunit of boundary $0$.

In the second section of this paper, some key results of general interest are developed. Of these key results, perhaps Proposition \ref{almostintegral} is central in that a general condition is given for the integral closure $T$ of an HFD $R$ to be of the form $T=U(T)R$. It is then shown that in the case of orders of rings of integers, there are no nonunits of boundary $0$. This result gives an affirmative answer to Conjecture I and Conjecture II in \cite{MO2016}. Some of the results in section 2 can be found in with different methods and context in \cite{R25} and for further perspectives on problems of this flavor, the interested reader is encouraged to consult \cite{GP2005}.

In the third section, we apply the results of the second section to show that elasticity is ``squeezed'' in the case of orders of rings of integers with radical conductor. More precisely, it is shown that if $R$ is an HFD order in the ring of algebraic integers $T$, and the conductor ideal $I:=(R:T)$ is radical, then every intermediate ring is also half-factorial.

\section{Overrings of Orders}

\begin{prop}\label{cl2}
Let $D$ be a ring of integers and $R$ an order in $D$. If $R$ is an HFD and $\omega\in D$ is an irreducible such that $\partial_R(\omega)=0$, then $\omega$ is prime in $D$.
\end{prop}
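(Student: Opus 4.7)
The plan is to reduce via Carlitz's theorem to a case analysis controlled by the class group of $D$, and to handle the remaining non-prime cases by a norm argument. By \cite{Co2}, $D$ is itself an HFD, so Carlitz's theorem gives $|\mathrm{Cl}(D)| \leq 2$. If $|\mathrm{Cl}(D)| = 1$, then $D$ is a PID and every irreducible is prime; we are done. Assume $|\mathrm{Cl}(D)| = 2$ and argue by contradiction: suppose $\omega$ is irreducible but not prime. A standard analysis of irreducibles in a class-number-two Dedekind domain then shows that $(\omega)D$ is either (b) $\mathfrak{p}^2$ for a non-principal prime $\mathfrak{p}$, or (c) $\mathfrak{p}_1 \mathfrak{p}_2$ for two distinct non-principal primes. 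Observe in passing that $\omega \notin R$: otherwise $\partial_R(\omega) = 0$ would force $\omega \in U(R) \subseteq U(D)$, contradicting irreducibility.

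To reduce case (c) to case (b), I would choose $\pi_i \in D$ with $(\pi_i)D = \mathfrak{p}_i^2$ (principal by class number two). Then $\omega^2 = u \pi_1 \pi_2$ for some $u \in U(D)$. Since units of $D$ have $\partial_R = 0$ (both $u$ and $u^{-1}$ are integral over $R$ with non-negative boundary summing to zero), applying $\partial_R$ to this equation and using $\partial_R(\omega) = 0$ gives $\partial_R(\pi_1) + \partial_R(\pi_2) = 0$; each summand is non-negative, so both vanish. Thus it suffices to derive a contradiction in case (b).

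For case (b), with $(\omega)D = \mathfrak{p}^2$ and $\partial_R(\omega) = 0$, my intended contradiction uses the field norm. Let $N = N_{K/\mathbb{Q}}$; then $N(\omega) \in \mathbb{Z} \subseteq R$ is a rational integer with $|N(\omega)| \geq 2$ (since $\omega$ is a non-unit in $D$), so $\partial_R(N(\omega)) \geq 1$. Meanwhile, when $K/\mathbb{Q}$ is Galois and $R$ is stable under $\mathrm{Gal}(K/\mathbb{Q})$---which holds for every order in a quadratic field, for instance---each $\sigma$ permutes the $R$-irreducibles and preserves $\partial_R$, so $\partial_R(N(\omega)) = \sum_\sigma \partial_R(\sigma(\omega)) = [K:\mathbb{Q}] \, \partial_R(\omega) = 0$, yielding the desired contradiction. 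The step I expect to be the main obstacle is extending this argument to the general case, where $K/\mathbb{Q}$ need not be Galois and $R$ need not be preserved by automorphisms: here the conjugates of $\omega$ may fail to lie in $K$, so $\partial_R$ cannot be applied to them directly. My plan in that setting is to pass to the integral closure of $R$ in the Galois closure of $K$, work with an analogue of the boundary map there, and carefully descend back to $R$; matching the boundaries through this lift is the delicate calculation I expect to require the most care.
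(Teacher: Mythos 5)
Your reduction steps are fine: Carlitz plus \cite{Co2} gives $|\mathrm{Cl}(D)|\leq 2$, the description of a non-prime irreducible as generating $\mathfrak{p}^2$ or $\mathfrak{p}_1\mathfrak{p}_2$ with nonprincipal primes is standard, units have boundary $0$, and the passage from case (c) to case (b) via $\omega^2=u\pi_1\pi_2$ is correct. But the argument you give for case (b) is not a proof of the proposition: the norm computation $\partial_R(N(\omega))=\sum_\sigma\partial_R(\sigma(\omega))=0$ requires both that $K/\mathbb{Q}$ be Galois (so that the conjugates $\sigma(\omega)$ lie in $K$ at all) and that $\sigma(R)=R$ for every $\sigma$ (so that $\partial_R\circ\sigma=\partial_R$), and you prove neither in general; you explicitly defer the general case. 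That deferral is the genuine gap, and the proposed repair is unlikely to close it: the boundary map only exists relative to a \emph{half-factorial} base, and the integral closure of $R$ (or any order containing it) in the Galois closure $L$ of $K$ need not be an HFD --- the class number of $\mathcal{O}_L$ can easily exceed $2$ even when $D$ has class number $2$ --- so there is in general no boundary map ``upstairs'' to lift to. Even when one exists, irreducibles of $R$ can factor further over the larger order, so there is no reason for the lifted boundary to restrict to $\partial_R$ on $K^*$; the ``delicate calculation'' you postpone is exactly where the argument has no foundation as stated.

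For comparison, the paper avoids norms and Galois considerations entirely and works with the conductor $I=(R:D)$: it manufactures an auxiliary irreducible $\pi$ with $(\pi)=AB$ for nonprincipal primes comaximal to $I$, uses finiteness of $D/I$ to get $\pi^k\in R$ and a count of prime-ideal factors to force $\partial_R(\pi)=1$, and then plays the two ideal factorizations $PA$, $QB$ versus $PB$, $QA$ of $\omega\pi$ against each other to contradict $\partial_R(\omega)=0$. Your norm argument does give a clean, self-contained proof in the Galois-stable setting (e.g.\ for orders in quadratic fields), but to cover the statement as claimed you would need an argument of the conductor/ideal-counting type (or some other device) for arbitrary orders.
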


\begin{proof}
We remark at the outset that as $R$ is an order and is an HFD, then $D$ is also an HFD (\cite{Co2}) and hence has $\vert\text{Cl}(D)\vert\leq 2$ (\cite{carlitz}).
Suppose $\omega\in D$ is irreducible and $\partial_R(\omega)=0$. If $\omega$ is not prime, then $(\omega)=PQ$ for nonprincipal primes $P,Q\subset D$ (\cite{carlitz}). We now construct a nonprime irreducible $\pi\in D$ by choosing two nonprincipal prime ideals $A,B$ comaximal to $I:=(R:D)$ and choosing $\pi$ such that $(\pi)=AB$.

We first claim that $\partial_R(\pi)=1$. To see this note that as $\pi$ is comaximal to $I$ and $D/I$ is finite, there exists $k\in\mathbb{N}$ such that $\pi^k+I=1+I$, and hence $\pi^k\in R$ (so $\partial_R(\pi)>0$). As an element of $R$, we can factor 

\[
\pi^k=\xi_1\xi_2\cdots\xi_m
\]

\noindent with each $\xi_i\in R$ irreducible and so $k\partial_R(\pi)=m$. In particular $m\geq k$.

Lifting this factorization back up to $D$ we have the ideal factorization

\[
(\pi)^k=(\xi_1)(\xi_2)\cdots(\xi_m)=A^kB^k.
\]

We now note that each $(\xi_i)$ is principal, and so by uniqueness of prime ideal factorization, each $(\xi_i)$ must factor into an even number of prime ideals (at least two). Hence the number of prime ideal factors of $(\xi_1)(\xi_2)\cdots(\xi_m)$ is at least $2m$ and is exactly equal to $2k$. Hence $2k\geq 2m$ and so $k\geq m$, and by the above, we now have equality. Since $k=m$ we have that $\partial_R(\pi)=1$.

Note that in $D$, we have $PA$ and $QB$ are principal and generated by irreducibles $\alpha$ and $\beta$ respectively. Since (up to units) $\alpha\beta=\omega\pi$, we must have one of $\alpha, \beta$ of boundary $0$ (we will select $\alpha$ without loss of generality). 

Rearranging the ideal factorizations, we have $PB=(\beta')$ and $QA=(\alpha')$. Since (again, up to units) $\alpha'\beta'=\omega\pi$, one of $\alpha'$ or $\beta'$ must be of boundary $0$. Notice that if $\partial_R(\beta')=0$ then $\partial_R(\alpha\beta')=0$, but $(\alpha\beta')=PAPB=P^2(AB)$. Since $P^2$ is principal and $AB$ is generated by an element of boundary $1$, this is a contradiction. On the other hand, if $\partial_R(\alpha')=0$, then up to units $\alpha\alpha'=\omega a$ where $a$ is an irreducible generator for the principal ideal $A^2$. Since $A$ contains $\pi$ and $\pi^k\equiv 1\text{mod}(I)$, $\pi$ is a unit mod $I$ and so $A$ and $I$ must be comaximal. Hence there exists  $j\in \mathbb{N}$ such that $a^j\in R$ (\cite{Co2}), and so $\partial_R(a^j)=j\partial_R(a)>0$ which in turn gives that $\partial_R(a)>0$. From this we observe that $0=\partial_R(\alpha\alpha')=\partial_R(\omega)+\partial_R(a)>0$, and since $\omega$ is irreducible, this contradiction shows that $\omega$ must be prime.
\end{proof}

We now record a useful corollary.

\begin{cor}\label{k2}
Let $R$ be an HFD order in the ring of integers $D$. If $z\in D$ has the property that $\partial_R(z)=0$ then $z$ is contained in no nonprincipal prime ideal.
\end{cor}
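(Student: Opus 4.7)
The approach is to reduce the claim to Proposition \ref{cl2} via unique factorization of ideals in the Dedekind domain $D$. We may assume that $z$ is a nonzero nonunit of $D$, since otherwise there is nothing to prove. As observed at the start of Proposition \ref{cl2}, the hypothesis that $R$ is an HFD order guarantees that $D$ is itself an HFD with $|\text{Cl}(D)| \leq 2$; in particular $D$ is atomic, so we may factor
\[
z = \omega_1 \omega_2 \cdots \omega_n
\]
with each $\omega_i$ irreducible in $D$.

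Next I would invoke additivity of the boundary map on $K^*$ together with the standard fact (highlighted in the introduction) that almost integral elements over an HFD have nonnegative boundary. Since each $\omega_i$ lies in $D$ and $D$ is integral over $R$, each $\partial_R(\omega_i) \geq 0$, and
\[
0 = \partial_R(z) = \sum_{i=1}^n \partial_R(\omega_i)
\]
forces $\partial_R(\omega_i) = 0$ for every $i$.

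Now Proposition \ref{cl2} applies directly to each $\omega_i$, yielding that each $\omega_i$ is prime in $D$. Thus the factorization $(z) = (\omega_1)(\omega_2)\cdots(\omega_n)$ is the prime ideal factorization of $(z)$ in the Dedekind domain $D$. Any prime ideal $P$ of $D$ containing $z$ must contain $(z)$, and by unique factorization of ideals in $D$ such a $P$ must coincide with some $(\omega_i)$, which is principal. Hence $z$ lies in no nonprincipal prime ideal of $D$.

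There is no substantial obstacle here: the work has been done in Proposition \ref{cl2}, and what remains is to combine the multiplicativity of $\partial_R$, the nonnegativity of boundary on (almost) integral elements, and the Dedekind structure of $D$ to package the conclusion for $z$ from the conclusion for its irreducible factors.
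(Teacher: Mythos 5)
Your proof is correct and follows essentially the same route as the paper's: factor $z$ into irreducibles of $D$, use nonnegativity and additivity of the boundary to see each factor has boundary $0$, apply Proposition \ref{cl2} to conclude each factor is prime, and then use the Dedekind property of $D$ to rule out a nonprincipal prime containing $z$. The only difference is that you spell out explicitly the boundary-additivity step that the paper leaves implicit.
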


\begin{proof}
We suppose that $\partial_R(z)=0$. If $z$ is a unit, then the conclusion is immediate, so we assume that $z=p_1p_2\cdots p_n$ where each $p_i\in D$ is an irreducible with $\partial_R(p_i)=0$. By Proposition \ref{cl2}, each $p_i$ is prime and so if $P\subset D$ is a nonprincipal prime containing $z$, then $p_i\in P$ for some $i$ which, since $D$ is Dedekind and $p_i$ is prime, contradicts the fact that $P$ is nonprincipal.
\end{proof}

\begin{prop}\label{finite0}
If $R$ is an HFD order in a ring of algebraic integers $D$, then the set of irreducibles $\pi\in D$ with the property that $\partial_R(\pi)=0$ is finite (up to associates in $D$).
\end{prop}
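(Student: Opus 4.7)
The plan is to use Proposition \ref{cl2} to reduce from irreducibles of boundary zero to prime elements of $D$, and then to show that every such prime element must generate a prime ideal containing the conductor $I = (R:D)$. Since $D$ is Dedekind and $I$ is a nonzero ideal, only finitely many primes of $D$ contain $I$, giving the desired finiteness.

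First I would fix $\pi \in D$ irreducible with $\partial_R(\pi) = 0$. By Proposition \ref{cl2}, $\pi$ is prime in $D$, so $(\pi)$ is a nonzero prime, hence maximal, ideal of the one-dimensional Dedekind domain $D$. The key step is then to show that $(\pi)$ contains $I$. Suppose for contradiction that $(\pi)$ does not contain $I$; then by maximality $(\pi) + I = D$, so $\pi$ is comaximal with $I$. Using that $D/I$ is finite (as $R$ is an order in a ring of integers, exactly as in the proof of Proposition \ref{cl2}), one picks $k \in \mathbb{N}$ with $\pi^k \equiv 1 \pmod I$, so $\pi^k \in 1 + I \subseteq R$. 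But then $\pi^k \in R$ with $\partial_R(\pi^k) = k\,\partial_R(\pi) = 0$, which forces $\pi^k$ to be a unit of $R$ (a boundary-zero element of an HFD is a unit). This contradicts the irreducibility of $\pi$ in $D$.

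Having established that every irreducible $\pi \in D$ with $\partial_R(\pi) = 0$ satisfies $(\pi) \supseteq I$, the conclusion is immediate: in the Dedekind domain $D$ the ideal $I$ admits only finitely many prime divisors (the finitely many primes appearing in the factorization of $I$), so the set of principal primes $(\pi)$ containing $I$ is finite, and associated irreducibles generate the same ideal. Hence the set of such $\pi$ is finite up to associates in $D$.

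The main obstacle I anticipate is nothing deep, merely the comaximality-to-power-in-$R$ step, which relies crucially on both $R$ being an order (so $I$ has finite index) and $R$ being an HFD (so that boundary-zero elements of $R$ are units). Everything else is formal bookkeeping in the Dedekind domain $D$.
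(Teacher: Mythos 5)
Your proposal is correct and takes essentially the same route as the paper's own proof: Proposition \ref{cl2} gives primality, and the finiteness of $D/I$ shows a boundary-zero irreducible cannot be comaximal with the conductor $I$ (else some power lies in $R$ as a boundary-zero nonunit, which is impossible in an HFD), so its prime ideal contains $I$ and there are only finitely many such primes. The only difference is cosmetic phrasing of the contradiction ($\pi^k$ a unit of $R$ versus $\partial_R(\pi^k)>0$), so there is nothing to add.
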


\begin{proof}
We first note that if $\pi\in D$ is irreducible with the property that $(\pi)$ is comaximal to $I$ then the coset $\pi+I$ is a unit in $D/I$. Since $D/I$ is finite, there exists $k\in\mathbb{N}$ such that $\pi^k+I=1+I$, and hence $\pi^k\in R$. So in this case, $\pi^k$ is a nonunit element of $R$, and we have that $\partial_R(\pi^k)=k\partial_R(\pi)>0$ which contradicts the assumption that $\partial_R(\pi)=0$. We conclude that any irreducible of boundary $0$ cannot be comaximal to $I$ and since any such irreducible is prime by Proposition \ref{cl2}, it is now immediate that there are only (at most) finitely many irreducible (prime) elements of $D$ that have boundary $0$.
\end{proof}

For the next result, we assume that $R$ is an HFD and $T$ is a half-factorial overring of $R$ that is boundary nonnegative (in the sense that if $x\in T$, then $\partial_R(x)\geq 0$). Note that if $R$ is a half-factorial order in a ring of algebraic integers, then any integral extension is boundary nonnegative, and what is more, the integral closure of $R$ is an HFD, so our assumptions naturally generalize the situation in which $R$ is an order in a ring of algebraic integers.

\begin{prop}\label{almostintegral}
Let $R$ be an HFD and $T$ be a half-factorial, boundary nonnegative overring of $R$. 
\begin{enumerate}
\item If $x\in\text{Irr}(T)$ is prime then $\partial_R(x)$ is equal to either $0$ or $1$.
\item If $x\in T$ is irreducible and $T$ is almost integral over $R$ (in particular, if $T$ is integral over $R$), then $\partial_R(x)$ is equal to either $0$ or $1$.
\item If $T$ is a UFD and $x\in T$ is almost integral and is a product of boundary $1$ primes, then $x$ is associated (by a unit in $T$) to an element of $R$. 
\item If $R$ is an HFD order in the ring of integers $T$, then any product of strictly positive boundary irreducibles is associated by a unit in $T$ to an element of $R$.
\end{enumerate}
\end{prop}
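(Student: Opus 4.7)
The plan is to split by the class number of $T$, which is at most $2$ by Carlitz (since $T$, the integral closure of the HFD order $R$, is itself an HFD as recalled in the proof of Proposition \ref{cl2}).

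If $|\Cl(T)|=1$, then $T$ is a UFD and each $\pi_i$ is a boundary-$1$ prime by part (2), so $x$, being integral over $R$ and a product of boundary-$1$ primes, falls directly under part (3).

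If $|\Cl(T)|=2$, I would write $x=a/b$ with $a,b\in R$ by choosing a nonzero $c\in I=(R:T)$ whose $T$-factorization involves no boundary-$0$ $T$-prime and setting $a:=cx$, $b:=c$. By Proposition \ref{finite0}, the boundary-$0$ $T$-irreducibles are finite up to associates, and by Proposition \ref{cl2} they generate principal primes of $T$; such a $c$ is then available by prime avoidance, provided no boundary-$0$ $T$-prime divides $I$. Factor $a$ and $b$ into irreducibles in the HFD $R$, and factor each $R$-irreducible $\alpha$ further in $T$: since $\partial_R(\alpha)=1$ and each $T$-irreducible factor has boundary $0$ or $1$ by part (2), $\alpha$ decomposes in $T$ as one boundary-$1$ $T$-irreducible times a product of boundary-$0$ $T$-primes. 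The choice of $c$ ensures that $a=cx$ has no boundary-$0$ $T$-primes in its $T$-factorization either, so each $R$-irreducible factor of $a$ is $U(T)$-associated to a single boundary-$1$ $T$-irreducible. HFD uniqueness of factorization in $T$ then matches each $\pi_i$, up to a unit of $T$, with an $R$-irreducible factor of $a$, producing $u\in U(T)$ and $r\in R$ with $ux=r$.

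The main obstacle is verifying the subclaim needed for the prime avoidance step: that no boundary-$0$ $T$-prime divides the conductor $I$. This should follow from combining the HFD structure of $R$ with Corollary \ref{k2} (restricting where boundary-$0$ elements can lie) and the fact that boundary-$0$ irreducibles of $T$ cannot lie in $R$ as nonunits, but a careful argument is required; if the subclaim fails for some principal prime generated by a boundary-$0$ irreducible, that prime must be handled by a separate local analysis which absorbs its contribution into a unit of $T$.
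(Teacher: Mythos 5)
There are genuine gaps, and they sit exactly in the case that makes the proposition hard. First, the proposal only addresses statement (4), taking (1)--(3) as black boxes (each of which needs its own argument); but even granting those, the decisive step in your class-number-$2$ case fails. ``HFD uniqueness of factorization in $T$'' is not available: when $|\Cl(T)|=2$, $T$ is half-factorial but not factorial, so two factorizations of $a=cx$ into $T$-irreducibles need only agree in \emph{length}, not term-by-term. Concretely, if some positive-boundary irreducible factor $\pi$ of $x$ has $(\pi)=PQ$ with $P,Q$ nonprincipal, the $R$-irreducible factors of $a$ may generate ideals of the shape $PA$ and $QB$ for other nonprincipal primes $A,B$, and then no $R$-irreducible factor of $a$ is $T$-associated to $\pi$ (the standard $2\cdot 3=(1+\sqrt{-5})(1-\sqrt{-5})$ phenomenon). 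Your matching argument therefore collapses precisely when $x$ is not a product of primes. The paper's proof confronts this by passing to ideals rather than elements: it chooses, by prime avoidance in $R$, an element $\alpha\in P'Q'$ (contractions to $R$) avoiding the contractions of the other conductor primes and the finitely many boundary-$0$ principal primes, notes that the contractions $A'$, $B'$ of the stray primes are invertible and nonprincipal, uses $|\Pic(R)|\le 2$ (the porism of Carlitz) to make $A'B'$ principal, generated by an irreducible $\xi$, and from $\pi_i\pi_j=\xi r$ extracts $r\in R$ with $(r)T=PQ=(x)$. Some such ideal-theoretic recombination is unavoidable; an elementwise matching of the two factorizations cannot work.

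Second, the prime-avoidance subclaim you flag as ``the main obstacle'' is not merely unverified; it is backwards. The argument of Proposition \ref{finite0} shows that a boundary-$0$ irreducible $\omega$ of $T$ cannot be comaximal with $I=(R:T)$; since $\omega$ is prime by Proposition \ref{cl2} and $T$ is Dedekind, the maximal ideal $(\omega)$ then contains $I$, so $\omega$ divides \emph{every} element of $I$. Hence an element $c\in I$ with no boundary-$0$ prime divisor exists only if there are no boundary-$0$ primes at all --- which is Theorem \ref{bound0}, proved later via the partial-conductor machinery and not available at the stage of Proposition \ref{almostintegral}; the promised ``separate local analysis which absorbs its contribution into a unit'' is not an argument. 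In short, the UFD branch of your plan is fine (it is essentially statement (3)), but the class-number-$2$ branch needs to be rebuilt along the ideal-theoretic lines of the paper.
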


\begin{rem}
It is worth noting here that if $R$ is an HFD, $T$ is an UFD overring, and the extension $R\subseteq T$ is almost integral then (3) implies that each element of $T$ not divisible by a nonunit of boundary $0$ is associated (via a unit in $T$) to an element in $R$. As an application, if $R$ is an order in a ring of integers and its integral closure (the full ring of integers) $T$ is a UFD, then $R$ and $T$ are ``close" in the sense that each element of $T$ that has no boundary $0$ nonunit divisor can be multiplied by a unit to obtain a corresponding element of $R$. We will also see later that in this particular case, there are no boundary $0$ nonunits, and hence we can conclude (with the addition of (4) above) that $T=U(T)R$. This result can also be found in \cite{R25} along with an interesting characterization of HFDs in the setting of rings of algebraic integers.
\end{rem}

\begin{proof}
Suppose first that $x\in T$ is of positive boundary and prime. Since $T$ is an overring of $R$, there exists $0\neq r\in R$ such that $rx\in R$. We first suppose that $\partial_R(r)=m\in\mathbb{N}_0$ and consider $rx\in R$. Note that 

\[
rx=\pi_1\pi_2\cdots\pi_s
\]

\noindent where each $\pi_i$ is an irreducible element of $R$ and $s=m+\partial_R(x)$. We first observe that as $x$ is prime in $T$, we can say that for some $1\leq i\leq s$ that $\pi_i=xt$ for some $t\in T$. Since $\partial_R(\pi_i)=1$ and $T$ is boundary nonnegative, we see that $\partial_R(x)$ must be $0$ or $1$, and this establishes the first statement. 

For the second statement, we suppose that $x$ is an irreducible element of $T$ and $\partial_R(x)\geq 2$. Since $x$ is almost integral over $R$, there is a nonzero $r\in R$ such that $rx^n\in R$ for all $n\in\mathbb{N}$. So for each $n\in\mathbb{N}$, the length of each factorization of $rx^n$ as an element of $R$ is equal to $\partial_R(rx^n)\geq \partial_R(r)+2n$. As an element of $T$, the length of $rx^n$ is equal to $k+n$ where $k$ is the (fixed) length of the factorization $r$ as an element of $T$. So for all $n$ such that $n>k-\partial_R(r)$, we have that $\partial_R(r)+2n>k+n$, which means the length of the factorization of $rx^n$ as an element of $R$ exceeds its length as an element of $T$. Since $T$ is a boundary nonnegative extension of $R$, we have that each irreducible factor in $R$ must remain a nonunit in $T$; as $T$ is an HFD, this gives our desired contradiction.

%%%%%%%%%%%%%%%%%%%%%%%%%%%%%%%%%%%%%

For the third statement, we first suppose that $x\in T$ is a boundary $1$ prime that is almost integral over $R$ (suppose that, using the notation above, that $rx^n\in R$ for all $n\in\mathbb{N}$). We look more closely at $r$ by factoring it into primes of $T$:

\[
r=z_1z_2\cdots z_kp_1p_2\cdots p_m
\]

\noindent where each $z_i, p_j$ is prime in $T$ with $\partial_R(z_i)=0$ and $\partial_R(p_j)=1$.

We now consider the element $rx^{k+1}\in R\subseteq T$. Since $\partial_R(rx^{k+1})=k+m+1$, then as an element of $R$, we have that 

\[
rx^{k+1}=\xi_1\xi_2\cdots\xi_{k+m+1}
\]

\noindent with each $\xi_i\in\text{Irr}(R)$. But since each $\xi_i$ is of boundary $1$ and $T$ is factorial, each $\xi_i$ is of the form

\[
\xi_i=
\begin{cases}
z_{i_1}z_{i_2}\cdots z_{i_k}p_j,\text{ or}\\
p_j,\text{ or}\\
z_{i_1}z_{i_2}\cdots z_{i_k}x,\text{ or}\\
x
\end{cases}
\]

\noindent up to a unit in $T$. Also precisely $k+1$ of the elements $\xi_i$ must take on the last two forms listed as an element of $T$ (that is there are precisely $k+1$ of the irreducibles that have $x$ as a prime factor in $T$). But no more than $k$ of these can be of the form $z_{i_1}z_{i_2}\cdots z_{i_k}x$ and so for some $1\leq i\leq k+m+1$, $\xi_i$ is associated in $T$ to $x$. It now follows that if $t\in T$ is a product of primes that are all of positive boundary then $t$ is associated to an element of $R$.

%%%%%%%%%%%%%%%%%%%%%%%%%%%%%%%%%%%%%%%

For the last statement, we recall that $T$ must be an HFD (\cite{Co2}) and suppose that $x\in T$ is an irreducible with $\partial_R(x)=1$. If $x$ is prime then the proof that $x$ is associated with an element in $R$ is almost identical to the previous case. So we will consider the case where $T$ is an HFD that is not a UFD (and so has class number $2$ by \cite{carlitz}) and $(x)=PQ$ for prime ideals $P,Q\subset T$. We declare that $I=(R:T)$ is the conductor and factor it into prime ideals $I=P_1P_2\cdots P_m$ in $T$ and use the notation $J'=J\bigcap R$ where $J$ is an ideal of $T$.

\begin{comment}
The next paragraph takes into consideration the fact that $P$ and $Q$ might be the same.
\end{comment}

We declare $Z$ be the set of principal prime ideals of $T$ of boundary $0$; this set is finite by Proposition \ref{finite0}. Now using prime avoidance, we select $\displaystyle\alpha\in (P'Q')\setminus(\bigcup_{k\in S} P'_k\bigcup Z)$ where $S$ is the set of indices in $k\in\{1,2,\ldots,m\}$ such that $P_k$ is distinct from $P$ and $Q$. We now factor $\alpha=\pi_1\pi_2\cdots\pi_t$ with each $\pi_i$ irreducible in $R$. If, as an ideal of $T$, some $(\pi_i)=PQ=(x)$ then we are done. If not, then as $\alpha\in PQ$, we have for some $1\leq i<j\leq t$, $(\pi_i)=PA$ and $(\pi_j)=QB$ for prime ideals $A,B\subset T$ that are comaximal to the conductor $I$.

We now observe that $A'$ and $B'$ are prime ideals of $R$ that are comaximal to $I$ and hence invertible. Additionally, we note that $A'$ (resp. $B'$) is nonprincipal. Indeed, if $A'=(a)$ then as $A$ (resp. $A'$) and $I$ are comaximal, we have $A=A'T=aT$, which is a contradiction. Hence $A'$ (and $B'$) are nonprincipal.

As a porism to the proof of that main result from \cite{carlitz}, $\vert \text{Pic}(R)\vert\leq 2$ and so $A'B'$ is a principal ideal which we will say is generated by an irreducible $\xi\in R$ (and $\xi$ is also irreducible in $T$ as $\xi$ is not contained in any principal prime ideal of boundary $0$). Since $\pi_i\in A'$ and $\pi_j\in B'$, we have that $\pi_i\pi_j\in (\xi)$ and so $\pi_i\pi_j=\xi r$ for some $r\in R$. We now claim that $AB=(\xi)T$. Were this not the case, then $ABJ=(\xi)T$ for some ideal $J\subset T$, but as $AB$ must be principal, this forces $J$ to be principal. Hence if $J$ is a proper ideal, $\xi$ cannot be irreducible. We conclude that $J=T$ and $AB=(\xi)T$ and so $(r)T=PQ=(x)$ and we are done.
\end{proof}

To facilitate the upcoming discussion, we now introduce the notion of partial conductors. In what follows, we will assume that $R$ is an HFD order in its full ring of integers $D$.

\begin{defn}
Let $\omega\in D$ be a boundary $0$ irreducible. We say that the element $d\in D$ partially conducts $\omega$ if there is an element $z\in D$ such that $\partial_R(z)=0$, and $dz\omega\in R$.
\end{defn}

\begin{prop}\label{2}
Suppose $R$ is an HFD order with ring of integers $D$. If $\omega\in D$ is a prime of boundary $0$, then the partial conductors of $\omega$ cannot all lie in the same maximal ideal of $D$.
\end{prop}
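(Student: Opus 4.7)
The plan is to proceed by contradiction: assume that every partial conductor of $\omega$ lies in a single maximal ideal $M$ of $D$. As in the proof of Proposition \ref{finite0}, since $\omega$ is a boundary-$0$ irreducible it is not comaximal with the conductor $I=(R:D)$; because $(\omega)_D$ is maximal in the Dedekind domain $D$, this forces $I\subseteq(\omega)_D$, so $\p:=(\omega)_D\cap R$ is a nonzero, hence maximal, ideal of the one-dimensional domain $R$.

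For any nonzero $y\in\p$, the quotient $y/\omega$ lies in $D$. Using that $D$ is an HFD whose boundary-$0$ irreducibles are exactly (associates of) the finite list $\omega_1,\ldots,\omega_s$ (Propositions \ref{cl2} and \ref{finite0}), we factor in $D$, up to a unit,
\[
y/\omega \;=\; d\cdot z,\qquad z\;=\;\omega_1^{a_1}\cdots\omega_s^{a_s},
\]
where $d$ is a product of strictly positive boundary irreducibles of $D$. Since $y$ is a nonunit of $R$, $\partial_R(y/\omega)=\partial_R(y)\geq 1$, so $d$ is itself a nonunit; and $dz\omega=y\in R$ with $\partial_R(z)=0$, so $d$ is a partial conductor of $\omega$. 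By hypothesis $d\in M$, and the strategy is now to pick $y$ so that $d\notin M$.

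If $M=(\omega_i)$ for some $i$, any nonzero $y\in\p$ works. Indeed, using $|\Cl(D)|\leq 2$ (see the proof of Proposition \ref{cl2}), every positive boundary irreducible $p$ of $D$ has $(p)=P$ with $P$ a principal prime of positive boundary, or $(p)=PQ$ with $P,Q$ nonprincipal; in neither case can the principal boundary-$0$ prime $(\omega_i)$ occur as a factor, so $p\notin(\omega_i)$, and hence $d\notin M$. If instead $M$ is not a boundary-$0$ prime, then because $z$ and $\omega$ are supported only on boundary-$0$ primes while $M$ is not, $M\mid (d)_D$ iff $M\mid (y)_D$, i.e.\ $d\in M$ iff $y\in M$; so it suffices to find $y\in\p\setminus M$, i.e.\ to show $\p\not\subseteq M$.

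Suppose for contradiction $\p\subseteq M$. Then $\p\subseteq M\cap R$, and as both are maximal in $R$, $\p=M\cap R$; in particular $\omega\in M$. If $M$ is nonprincipal, this directly contradicts Corollary \ref{k2}. If $M=(\mu)$ is principal, then $\mu$ is a positive boundary prime of $D$ (by Proposition \ref{almostintegral}(1), since we are in the non-boundary-$0$ case), and $\mu\mid\omega$ combined with irreducibility of $\mu$ and $\omega$ in the Dedekind domain $D$ forces $\mu\sim\omega$; thus $M=(\omega)$, contradicting that $M$ is not a boundary-$0$ prime. Therefore $\p\not\subseteq M$, and choosing $y\in\p\setminus M$ yields a partial conductor outside $M$. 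The main obstacle of the plan is precisely this last step: the non-boundary-$0$ case for $M$ is not handled by a direct element-construction but rather reduces to showing $\p\not\subseteq M$, which is where Corollary \ref{k2} and the irreducibility of $\omega$ inside $D$ do the essential work.
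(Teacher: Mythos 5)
Your reduction is set up well, and part of it is genuinely correct: every nonzero $y\in\p=\omega D\cap R$ does yield a partial conductor $d$ that is a product of positive-boundary irreducibles of $D$; no positive-boundary irreducible can lie in a principal boundary-$0$ prime $(\omega_i)$, which disposes of the case $M=(\omega_i)$; and when $M$ is not a boundary-$0$ prime your observation that $d\in M$ if and only if $y\in M$ is right, so everything reduces to producing a nonzero $y\in\p\setminus M$. The gap is exactly at the step you yourself flag as the main obstacle: from $\p=M\cap R$ you conclude ``in particular $\omega\in M$.'' That inference is unjustified. Since $\omega$ is not comaximal with the conductor, $\p=\omega D\cap R$ contains $I=(R:D)$, and over a prime of $R$ containing the conductor there may lie several distinct maximal ideals of $D$ (this is precisely the non-invertible, singular situation in an order). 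So $M$ could be a second maximal ideal of $D$ lying over $\p$, different from $\omega D$; then $M\cap R=\p$ holds while $\omega\notin M$, one has $\p\subseteq M$, no $y\in\p\setminus M$ exists, and neither Corollary \ref{k2} nor your associate argument for principal $M$ ever gets traction. Equality of contractions forces equality of the primes upstairs only when the prime avoids the conductor, which is exactly what fails here.

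Moreover, your own equivalence shows that for $M$ not a boundary-$0$ prime, ``all partial conductors of $\omega$ lie in $M$'' is the same as ``$\p\subseteq M$,'' i.e.\ ``$M$ lies over $\p$.'' So excluding a second prime of $D$ over $\p$ is not a loose end; it is the entire content of the proposition in this case, and asserting $\omega\in M$ at that point begs the question. For comparison, the paper's proof follows the same track up to the equality $\omega D\cap R=P\cap R$ and then extracts its contradiction by writing a nonzero $p\in P\cap R$ as $p=\omega^m d$ with $\omega\nmid d$ and concluding that the partial conductor $d$ lies in $\omega D$ --- that is, by treating the distinguished maximal ideal as $\omega D$ itself. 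Your write-up makes that identification explicit but does not prove it; supplying an argument that $M$ (or $P$) must actually be $\omega D$, or replacing this step by a different mechanism, is what is missing for the non-boundary-$0$ case.
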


\begin{proof}
If this were not the case then there is a prime ideal $P\subset D$ such that if $x\in D$ with $xz\omega\in R$ for some boundary $0$ element $z$, then $x\in P$. Note that as $\omega\in D$ is prime, we have that $\omega D\bigcap R$ is prime, and so by assumption, we have that $\omega D\bigcap R\subseteq P\bigcap R$. As $R$ is $1-$dimensional, and both ideals are nonzero, this containment must be an equality. To see that this gives a contradiction, consider a nonzero element $p\in P\bigcap R$. If equality holds then $p\in\omega D$ and we can write

\[
p=\omega^m d
\]

\noindent with $d\in D$ such that $\omega$ does not divide $d$. But as $d$ is also a partial conductor of $\omega$, this means that $d$ is also in $\omega D$ which is our contradiction, and so not all partial conductors of $\omega$ can all be contained in the same maximal ideal.
\end{proof}

\begin{cor}\label{cor2}
Suppose $R$ is an HFD order with ring of integers $D$. If $\omega\in D$ is a prime of boundary $0$, then $\omega$ is partially conducted to $R$ by infinitely many comaximal boundary $1$ irreducible elements of $D$.
\end{cor}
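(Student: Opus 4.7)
The plan is to build the infinite family inductively, exploiting the conductor $I=(R:D)$ as the supply of partial conductors together with classical prime avoidance. The preliminary observation is that every $j\in I$ partially conducts $\omega$: since $j\omega\in ID=I\subseteq R$, the defining condition holds with $z=1$.

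Suppose inductively that pairwise comaximal boundary-$1$ irreducibles $q_1,\ldots,q_n\in D$, each partially conducting $\omega$, have already been produced, and let $\mathcal{P}$ denote the finite set of prime ideals of $D$ dividing some $q_i$. Using Proposition~\ref{cl2}, Corollary~\ref{k2}, and Proposition~\ref{almostintegral}(2), one verifies that every $P\in\mathcal{P}$ is comaximal with $I$; so by prime avoidance applied to $I$ and the family $\mathcal{P}$, there is a nonzero nonunit $j\in I$ lying outside $\bigcup_{P\in\mathcal{P}}P$.

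Now factor $j\omega\in R$ into $R$-irreducibles $j\omega=\pi_1\cdots\pi_s$. Since $\omega$ is a prime of $D$ dividing $j\omega$, some $\pi_k$ is divisible by $\omega$ in $D$; after relabelling assume $\omega\mid\pi_1$. The element $\pi_1\in R$ is $R$-irreducible with $\partial_R(\pi_1)=1$, so by Proposition~\ref{almostintegral}(2) together with additivity of $\partial_R$, its factorization into $D$-irreducibles consists of boundary-$0$ factors together with exactly one boundary-$1$ factor, which we designate $q_{n+1}$. Peeling one copy of $\omega$ off the boundary-$0$ part yields $\pi_1=q_{n+1}\cdot\omega\cdot z$ with $z$ a product of boundary-$0$ irreducibles, so $q_{n+1}\cdot z\cdot\omega=\pi_1\in R$ exhibits $q_{n+1}$ as a partial conductor of $\omega$. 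The prime divisors of $q_{n+1}$ in $D$ appear in the prime decomposition of $\pi_1$ and hence of $j\omega=j\cdot\omega$; they avoid $\omega D$ (being boundary-$1$ primes) and $\mathcal{P}$ (by the choice of $j$), so $q_{n+1}D+q_iD=D$ for all $i\leq n$, and iteration produces the required infinite family.

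The main obstacle is justifying that the primes of $D$ dividing the extracted $q_{n+1}$ lie outside $\mathcal{P}\cup\{\omega D\}$---that is, that they are all boundary-$1$ primes appearing in the factorization of $j$. In the class-number-$1$ case this is immediate since every irreducible of $D$ is prime; in the class-number-$2$ case one uses Corollary~\ref{k2} to preclude non-principal boundary-$0$ prime divisors from the extracted $q_{n+1}$, which is what makes the iterative prime-avoidance argument close the loop at each step.
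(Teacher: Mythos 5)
There is a genuine gap at the prime-avoidance step. To find $j\in I$ with $j\notin\bigcup_{P\in\mathcal{P}}P$ you need $I\not\subseteq P$ for every prime $P$ dividing one of the previously produced $q_i$, and you assert that this comaximality ``is verified'' from Proposition \ref{cl2}, Corollary \ref{k2}, and Proposition \ref{almostintegral}(2). None of these results says anything about the prime divisors of \emph{boundary-$1$} irreducibles relative to the conductor: the argument in Proposition \ref{finite0} shows that boundary-$0$ primes must divide $I$, but the converse fails, and a prime of $D$ containing $I$ (principal or not) may perfectly well divide a boundary-$1$ irreducible. Your construction makes this likely rather than avoidable: $q_{n+1}$ is extracted from a factorization of $j\omega$ with $j\in I$, and every element of $I$ is divisible by \emph{every} prime of $D$ lying over $I$, so nothing prevents the boundary-$1$ factor you peel off from absorbing such a prime. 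As soon as one $q_i$ has a prime divisor containing $I$, no element of $I$ can avoid it and the induction stops; this can already happen at the second step. Your closing paragraph identifies a different and easier issue (avoiding $\omega D$ and $\mathcal{P}$, which your choice of $j$ does handle) and the appeal to Corollary \ref{k2} there does not touch the conductor problem, since $q_{n+1}$ has boundary $1$, not $0$.

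For comparison, the paper's proof never needs its partial conductors to be comaximal with $I$. It first uses Proposition \ref{2} to produce two comaximal boundary-$1$ partial conductors, and then, given $p_1,\dots,p_n$ with $p_iz_i\omega\in R$, it factors the element $\alpha=p_1z_1\omega+p_2z_2\omega\cdots p_nz_n\omega\in R$, extracts from it a new partial conductor of the form $pz\omega$, and proves $p$ is comaximal to each $p_i$ by a dichotomy on any common prime $P$: if $P$ is principal it would be generated by a boundary-$0$ prime and could not contain boundary-$1$ irreducibles, and if $P$ is nonprincipal it would contain a boundary-$0$ element, contradicting Corollary \ref{k2}. If you want to salvage your conductor-based supply of partial conductors, you would need a separate argument controlling the primes over $I$ that appear in the extracted boundary-$1$ factors, or you should switch to a mechanism like the paper's sum trick that manufactures comaximality directly.
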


\begin{proof}
We first remark that if $\omega$ is conducted to $R$ by an element $d\in D$ then $d\omega=\pi_1\pi_2\cdots\pi_k$ where each $\pi_j$ is an irreducible element of $R$. Since (as an element of $D$) $\omega$ must divide some $\pi_i$, we have that $\pi_i=\omega d'$. As $\partial_R(\pi_i)=1$, we conclude that $d'$ can be expressed in the form $d'=zd''$ where $\partial_R(z)=0$ and $d''$ is an irreducible element of $D$. The upshot is that if $\omega$ is our prime in $D$ of boundary $0$, it can be conducted to $R$ by an irreducible in $D$ of boundary $1$ times an element of boundary $0$. This construction, coupled with Proposition \ref{2}, allows us to construct two boundary $1$ partial conductors that are comaximal.

%$Also, since $R$ is an HFD, so is $D$ (\cite{Co2}) and so the class number of $D$ is $1$ or $2$ (\cite{carlitz}). 

Now suppose that we have comaximal boundary $1$ irreducibles of $D$ $\{p_1, p_2,\ldots,p_n\}, n\geq 2$, that all partially conduct $\omega$ to $R$. For all $1\leq i\leq n$, we write $ p_iz_i\omega\in R$, with each $z_i$ of boundary $0$. We now consider 

\[
\alpha:=p_1z_1\omega+p_2z_2\omega p_3z_3\omega\cdots p_nz_n\omega\in R.
\]

\noindent As $\alpha\in(\omega)\bigcap R$, it is a nonunit and so $\partial_R(\alpha)>0$. If we factor $\alpha$ into irreducibles in $R$, one of them must be of the form $pz\omega$ for some $z\in D$ with $\partial_R(z)=0$ and $p$ an irreducible in $D$ of boundary $1$. Suppose that $p$ and some $p_i$ are not comaximal. In the first case, we assume that $p$ and $p_1$ are not comaximal and declare that $p, p_1\in P$ for some prime ideal $P\subset D$. This means that $p_2z_2\omega p_3z_3\omega\cdots p_nz_n\omega\in P$, but by assumption, no $p_i\in P$ for $i\geq 2$, and we conclude that $z_2\omega z_3\omega\cdots z_n\omega\in P$. But now if $P$ is principal this would mean it is generated by a boundary $0$ prime (and hence could not contain the boundary $1$ irreducibles $p, p_1$), and if $P$ is not principal, this contradicts Corollary \ref{k2}. Hence $\omega$ is partially conducted to $R$ by infinitely many comaximal boundary $1$ irreducible elements of $D$. The second case (in which the second boundary $1$ irreducible $p_i$ has index $i\geq 2$) is almost identical.
\begin{comment}
We now deal with the case in which $\text{Cl}(R)\cong\mathbb{Z}_2$.

 In this situation, most of the natural subcases have already been covered. In particular, if there are are two comaximal principal primes partially conducting $\omega$ then we can proceed as in the previous case, and likewise, if $p$ is a principal prime and $q=Q_1Q_2$ is an irreducible that both partially conduct $\omega$, then we can again proceed as in the previous case. So we consider the case where $p=P_1P_2$ and $q=Q_1Q_2$ are irreducibles both partially conducting $\omega$. If the prime ideals $P_1,P_2, Q_1, Q_2$ are all distinct, then we can proceed as in the previous case. If not then by Proposition \ref{2}, we can assume that $P_1$ and $Q_1$ are distinct and $P_2=Q_2$. Applying Proposition \ref{2} and the above observations once again, we can assume that the irreducible $a=P_1Q_1$ partially conducts $\omega$. So there exist $z_1,z_2,z_3\in T$ of boundary $0$ such that $pz_1\omega, qz_2\omega, az_3\omega\in R$. Now note that 

\[
\alpha:=pz_1\omega+qz_2\omega+az_3\omega\in R
\]

\noindent and $\alpha$ cannot be contained in any of the primes $P_1, P_2, Q_1$. Since $\alpha\in (\omega)\bigcap R$, it must possess an irreducible (in $R$) factor of the form $\gamma z\omega$. With $\gamma$ an irreducible in $R$ of boundary $1$ that is comaximal to $p$. This reduces us to a previous case, and the proof is complete.
\end{comment}
\end{proof}

The next result is a central one in this work. We show that if $R$ is an HFD order in the ring of integers $D$ then there is no nonunit in $D$ of boundary $0$.

\begin{thm}\label{bound0}
Let $R$ be an HFD order in the ring of algebraic integers $D$ and $A$ an intermediate ring. If $z\in A$ has the property that $\partial_R(z)=0$, then $z$ is a unit in $A$.
\end{thm}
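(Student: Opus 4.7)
The plan is to reduce the theorem to the single structural claim that $D$ itself contains no non-unit of boundary zero. If $z \in A$ satisfies $\partial_R(z) = 0$ and this claim holds, then $z \in U(D) \cap A$. For any order-intermediate ring $A$ in a ring of algebraic integers, $U(D)\cap A = U(A)$: any $u \in A$ that is a unit of $D$ satisfies a monic integer polynomial with constant term $\pm 1$ (since the norm of an algebraic integer unit is $\pm 1$), and that relation rearranges to express $u^{-1}$ as an element of $A$. So it suffices to prove: no non-unit of $D$ has boundary $0$.

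Argue by contradiction: suppose $\omega \in D$ is a non-unit with $\partial_R(\omega) = 0$. By Proposition \ref{cl2}, after replacing $\omega$ by one of its irreducible factors in $D$, we may take $\omega$ to be prime. First observe that $\omega D$ must contain the conductor $I := (R:D)$; otherwise $\omega+I$ is a unit in the finite ring $D/I$, so $\omega^k \in 1+I \subseteq R$ for some $k$, producing a non-unit of $R$ of boundary $0$ and contradicting the HFD property (just as in the proof of Proposition \ref{finite0}). Apply Corollary \ref{cor2} to obtain infinitely many pairwise comaximal boundary-$1$ irreducibles $p_1,p_2,\dots \in D$ that partially conduct $\omega$, with $p_i z_i \omega \in R$ for $z_i \in D$ of boundary $0$. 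Only finitely many primes of $D$ contain $I$, so after passing to a cofinite subcollection every $p_i$ is comaximal to $I$; by Proposition \ref{almostintegral}(4) we may write $p_i = v_i \eta_i$ with $v_i \in U(D)$ and $\eta_i \in R$ an irreducible of boundary $1$. Thus for each $i$ we have two irreducibles $\pi_i := p_i z_i \omega$ and $\eta_i$ of $R$, both lying in the invertible prime $\mathfrak{p}_i := p_i D \cap R$; their ratio $\pi_i/\eta_i = v_i z_i \omega$ lies in $D$ with boundary zero, and if this ratio were ever in $R$ it would be a non-unit element of $R$ of boundary $0$, forcing $\omega \in U(D)$ and contradicting the non-unit assumption. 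So $v_i z_i \omega \in D\setminus R$ for every $i$.

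The main step, and the main obstacle, is to leverage this infinite supply of partial conductors to produce a boundary-zero non-unit actually in $R$. I would split on the structure of $\mathrm{Pic}(R)$, which has order at most $2$ by the Carlitz-style porism invoked in the proof of Proposition \ref{almostintegral}. If some $\mathfrak{p}_i$ is principal, say $\mathfrak{p}_i = (\rho)$, then both $\eta_i$ and $\pi_i$ generate $\mathfrak{p}_i$ up to $R$-units (since each is an irreducible element of $R$ of boundary $1$ in $\mathfrak{p}_i$), so $\pi_i$ and $\eta_i$ are associate in $R$, and their ratio $v_i z_i \omega$ is a unit of $R$, forcing $\omega \in U(D)$ and giving the contradiction. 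Otherwise, every $\mathfrak{p}_i$ is non-principal and lies in the unique non-trivial class of $\mathrm{Pic}(R)$, so each product $\mathfrak{p}_i\mathfrak{p}_j$ is principal, generated by a boundary-$2$ element $\rho_{ij} \in R$ whose factorization in $R$ must consist of two boundary-$1$ irreducibles whose $D$-factorizations are associates of $p_i$ and $p_j$. I would then exploit the pairwise comaximality of the $p_i$'s in $D$ to form a clever linear combination of the $\pi_i$'s, the $\eta_i$'s, and elements associated with $\rho_{ij}$—analogous to the element $\alpha$ constructed in the proof of Corollary \ref{cor2} using prime avoidance and the finiteness of boundary-zero primes (Proposition \ref{finite0})—that, on the one hand, is forced into $\omega D \cap R$ with controlled boundary, and on the other hand, when factored in $R$, must include one of the forbidden boundary-zero ratios $v_i z_i \omega$ as an element of $R$. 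That produces a non-unit boundary-zero element of $R$, contradicting the HFD property and completing the proof. The delicate point—the main obstacle—is ensuring that the combinatorial construction actually lands in $R$ rather than merely in $D$, which requires simultaneously tracking comaximality in $D$, contraction behavior into $R$, and the limited Picard structure of the order.
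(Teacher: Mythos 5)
Your reduction to the case $A=D$ (via the norm argument for units) and to a prime $\omega$ of boundary $0$ is fine, and your principal subcase is sound: if $\mathfrak{p}_i=p_iD\cap R$ is principal then $\pi_i$ and $\eta_i$ are $R$-associates and $v_iz_i\omega\in U(R)$ gives the contradiction. But the heart of the theorem is the remaining case, and there your argument stops at a plan: you say you ``would form a clever linear combination'' of the $\pi_i$, $\eta_i$ and $\rho_{ij}$ that lands in $\omega D\cap R$ and forces a boundary-zero ratio into $R$, and you yourself flag that making the construction land in $R$ is the unresolved obstacle. That obstacle is the entire content of the theorem, and nothing in your sketch supplies it. Moreover, the one concrete claim you do make in that case is not justified: the elements produced by Corollary \ref{cor2} are only irreducible in $D$, not prime (so calling $\mathfrak{p}_i$ an ``invertible prime'' is already inaccurate, though invertibility and properness are all the principal subcase needs), and when $|\Cl(D)|=2$ the generator $\rho_{ij}$ of $\mathfrak{p}_i\mathfrak{p}_j$ factors in $R$ into two boundary-one irreducibles whose $D$-factorizations need \emph{not} be associates of $p_i$ and $p_j$: if $p_iD=P_1P_2$ and $p_jD=Q_1Q_2$ with nonprincipal primes, the factors of $\rho_{ij}$ may instead generate $P_1Q_1$ and $P_2Q_2$. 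So the Picard-group dichotomy, as sketched, does not reach a contradiction.

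The missing idea, which is how the paper closes the argument, is to exploit Proposition \ref{finite0} structurally: there are only finitely many boundary-zero primes of $D$, so modulo $U(D)$ the boundary-zero elements form a free commutative monoid of finite rank $n$. After replacing each partial conductor $p_i$ (comaximal to the conductor) by a suitable power lying in $R$, consider $n+1$ of the ``companions'' $z_iz$ with $p_i\,z_iz\in R$. Linear dependence in $\oplus_{i=1}^{n}\mathbb{Z}$ yields a multiplicative relation between two subproducts of companions built from disjoint sets of indices, equal up to a unit $u\in U(D)$; raising to a power $m$ with $u^m\in R$ produces a single element $\alpha$, divisible by $z$ in $D$, which is conducted into $R$ by two \emph{comaximal} elements $p,q\in R$ (products of powers of the corresponding $p_i$). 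Writing $1=r_1p+r_2q$ and multiplying by $\alpha$ gives $\alpha=r_1p\alpha+r_2q\alpha\in R$, a nonunit of $R$ of boundary $0$ --- the contradiction. This ``one element with two comaximal conductors'' device is exactly what guarantees the constructed element lies in $R$, the point your proposal leaves open; without it (or an equivalent mechanism) the proof is incomplete.
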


\begin{proof}
Since $R\subseteq A\subseteq D$ are integral extensions, it suffices to prove the statement for the case $A=D$.

To this end, we suppose $z\in D$ is a prime that has the property that $\partial_R(z)=0$. Let $S=\{p_i\}$ be an infinite family of comaximal elements that partially conduct $z$ to $R$, and we further stipulate that each element is also comaximal to the conductor $(R:D)$. We observe that if $p$ partially conducts $z$ to $R$ then for some boundary $0$ element $z_1\in T$, $pz_1z$ is in $R$. Additionally, if $p$ is comaximal to the conductor, then $p^n\in R$ for some $n\in\mathbb{N}$ (see \cite{Co2} or the proof of Proposition \ref{cl2}) as is $p^nz_1^nz^n$. So $p^n$ is an element of $R$ that partially conducts $z$ to $R$, and so for ease of notation, we will assume that each $p_i\in S$ is an element of $R$ such that $p_iz_iz\in R$, where $\partial_R(z_i)=0$. We will call $z_iz$ a companion of $p_i$.

We now observe that Proposition \ref{cl2} in tandem with Proposition \ref{finite0} gives that the set of elements of $D$ with boundary $0$ form a monoid, $M$, isomorphic to 
\[
U(D)\oplus(\oplus_{i=1}^n\mathbb{N}_0)
\]

\noindent where $n$ is the number of irreducible (prime) elements of $D$ of boundary $0$ and $U(D)$ denotes the units of $D$.

Focusing on $\oplus_{i=1}^n\mathbb{N}_0\subset \oplus_{i=1}^n\mathbb{Z}$, we note that any set of $n+1$ elements of $\oplus_{i=1}^n\mathbb{N}_0$ must form a linearly dependent subset of $\oplus_{i=1}^n\mathbb{Z}$, and so we select the set of companions of $p_1,p_2,\ldots,p_{n+1}\in S$. As these companions form a linearly dependent set, we obtain the relation (written multiplicatively after discarding any terms with $0$ exponent)

\[
u(zz_1)^{a_1}(zz_2)^{a_2}\cdots(zz_t)^{a_t}=(zz'_1)^{a'_1}(zz'_2)^{a'_2}\cdots(zz'_s)^{a'_s}
\]

\noindent where $u\in U(D)$, each $a_i, a'_i>0$. and $s+t\leq n+1$.

Now since $u\in U(D)$ another appeal to either \cite{Co2} or the proof of Proposition \ref{cl2}, we can take the above displayed equation to the power of $m$, where $u^m\in R$ to obtain (up to a unit in $R$)

\[
\alpha:=(zz_1)^{ma_1}(zz_2)^{ma_2}\cdots(zz_t)^{ma_t}=(zz'_1)^{ma'_1}(zz'_2)^{ma'_2}\cdots(zz'_s)^{ma'_s}.
\]

Using this equation, we see that $\alpha$ is conducted to $R$ by both $p:=p_1^{ma_1}p_2^{ma_2}\cdots p_t^{ma_t}$ and $q:={p'}_1^{ma'_1}{p'}_2^{ma'_2}\cdots {p'}_s^{ma'_s}$ and by construction are both themselves in $R$ and are comaximal. Hence, we can find $r_1,r_2\in R$ such that $r_1p+r_2q=1$; multiplying by $\alpha$ gives

\[
r_1p\alpha+r_2q\alpha=\alpha, 
\]

\noindent and as $p\alpha, q\alpha\in R$, we have that $\alpha\in R$ is of boundary $0$ and is a nonunit as it is divisible by $z$ in $D$. This contradiction completes the proof.
\end{proof}

\section{The Squeeze Theorem}

In this section we show that if $R$ is an order in a ring of integers $T$ such that the conductor ideal $I:=(R:T)$ is radical, then every intermediate domain is an HFD. In other words, in this particular case, the elasticity of intermediate domains never increase as you ``approach'' the integral closure.

\begin{thm}
Let $R$ be an HFD order in a ring of algebraic integers and $T$ an integral overring of $R$. The following conditions are equivalent.
\begin{enumerate}
\item $T$ is an HFD.
\item For every $\pi\in\text{Irr}(T), \partial_R(\pi)=1$.
\end{enumerate}
\end{thm}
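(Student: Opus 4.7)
The plan is to verify each implication using the machinery already assembled in Section 2. The boundary map $\partial_R$ is a group homomorphism from $K^*$ to $\mathbb{Z}$ (well-defined precisely because $R$ is an HFD), and that fact will do essentially all the work.

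For (1) $\Rightarrow$ (2), I would combine Proposition \ref{almostintegral}(2) with Theorem \ref{bound0}. Because $R$ is an HFD order in a ring of algebraic integers and $T$ is an integral overring, $T$ is automatically boundary nonnegative (this is noted just before Proposition \ref{almostintegral}). Under the further hypothesis that $T$ is half-factorial, Proposition \ref{almostintegral}(2) then forces $\partial_R(\pi) \in \{0,1\}$ for every $\pi \in \mathrm{Irr}(T)$. An irreducible is a nonunit, so Theorem \ref{bound0} (applied to the intermediate ring $T$) rules out boundary $0$, leaving $\partial_R(\pi) = 1$.

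For (2) $\Rightarrow$ (1), two ingredients are needed: atomicity of $T$, and equality of factorization lengths. For atomicity, I would note that the full ring of algebraic integers $\overline{R}$ is finitely generated as an $R$-module, so any intermediate ring $T$ sits as an $R$-submodule of a finitely generated module over the Noetherian ring $R$, and hence is itself a finitely generated $R$-module. Therefore $T$ is Noetherian as a ring, and so atomic. For the length comparison, given any two irreducible factorizations
\[
\pi_1\pi_2\cdots\pi_n = \xi_1\xi_2\cdots\xi_m
\]
in $T$, apply $\partial_R$ to both sides. Since $\partial_R : K^* \to \mathbb{Z}$ is a homomorphism and each $\partial_R(\pi_i) = \partial_R(\xi_j) = 1$ by hypothesis (2), one reads off $n = m$ immediately.

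There is no real obstacle here: both directions are essentially formal consequences of Proposition \ref{almostintegral} and Theorem \ref{bound0}. The only point to take a little care with is the atomicity step in (2) $\Rightarrow$ (1), where one must explicitly invoke the module-finiteness of $\overline{R}$ over $R$ to conclude that $T$ is Noetherian; everything else is bookkeeping against the group homomorphism $\partial_R$.
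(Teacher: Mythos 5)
Your proof is correct, and it is worth noting how it differs from the paper's. The paper disposes of this theorem in two lines: it observes that every overring of an order is $1$-dimensional Noetherian (hence atomic), invokes Theorem \ref{bound0} to kill boundary-$0$ nonunits, and then cites \cite[Theorem 2.5]{Co2}, which packages exactly the equivalence ``HFD $\Leftrightarrow$ every irreducible has boundary $1$'' for such overrings. You instead reprove that equivalence from the paper's own toolkit: for (1)$\Rightarrow$(2) you combine Proposition \ref{almostintegral}(2) (legitimately applicable, since an integral overring of an HFD order is boundary nonnegative, as noted before that proposition, and hypothesis (1) supplies half-factoriality) with Theorem \ref{bound0} to exclude boundary $0$; for (2)$\Rightarrow$(1) you get atomicity from module-finiteness of $\overline{R}$ over $R$ (this is where you restrict to integral overrings, whereas the paper's Krull--Akizuki-style remark covers all overrings, which is all that is needed here) and then read off equality of lengths by applying the homomorphism $\partial_R$ to any two irreducible factorizations. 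The trade-off: the paper's route is shorter but leans on an external result, while yours is self-contained and makes transparent that the whole theorem is a formal consequence of $\partial_R$ being a well-defined homomorphism taking the value $1$ on all irreducibles of $T$; both arguments are sound.
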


\begin{proof}
Since $R$ is an order in a ring of algebraic integers, all of its overrings are $1-$dimensional and Noetherian, and hence atomic. This observation, coupled with Theorem \ref{bound0} shows that  \cite[Theorem 2.5]{Co2} applies.
\end{proof}

\begin{comment}
\begin{cor}
Let $R$ be an order in a ring of algebraic integers with integral closure $\overline{R}$ and let $I:=(\overline{R}:R)$ be the conductor ideal. The following conditions are equivalent.
\begin{enumerate}
\item Every domain $T$ with $R\subseteq T\subseteq\overline{R}$ is an HFD.
\item For all $\pi\in\text{Irr}(\overline{R})$, $\pi$ is irreducible in $R[\pi]$.
\end{enumerate}
\end{cor}

\begin{proof}
For $(1)\Longrightarrow(2)$, suppose that $\pi\in\text{Irr}(\overline{R})$. Since $\partial_R(\pi)=1$, we have from Theorem \ref{bound0} that $\pi$ must be irreducible in $R[\pi]$.

For the other implication, we suppose that there is an intermediate $T$ that is not an HFD. We apply \cite[Theorem 2.5]{Co2} to conclude that there is an irreducible $\pi\in T$ with $\partial_R(\pi)>1$. Note that $\pi$ is necessarily 
\end{proof}

\begin{lem}
Let $R$ be an order in a ring of algebraic integers and $I=(\overline{R}:R)$. If $R$ is an HFD then there is a unit of $\overline{R}$ in every coset $x+I\in U(\overline{R}/I)$.
\end{lem}

\begin{proof}
Suppose that $x+I\in U(\overline{R}/I)$. By Proposition \ref{almostintegral} we have that there is a unit $v\in U(\overline{R})$
 such that $vx=r\in R$
\end{proof}
\end{comment}

\begin{thm}\label{bandaid}
Let $R$ be an HFD order in a ring of algebraic integers and $\overline{R}$ its integral closure. The following conditions are equivalent.
\begin{enumerate}
\item For all $T$ with $R\subseteq T\subseteq \overline{R}$, $T$ is an HFD.
\item For all $\alpha\in\overline{R}$, $\alpha$ is irreducible in $R[\alpha]$ if and only if $\partial_R(\alpha)=1$.
\end{enumerate}
\end{thm}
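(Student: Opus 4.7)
\emph{Proof plan.} The strategy is to reduce both directions to the preceding theorem of this section, which characterizes the HFD property of an integral overring $T$ of $R$ by the condition that every irreducible of $T$ has $\partial_R$-boundary equal to $1$. Combined with Theorem \ref{bound0}, which forbids nonunits of boundary $0$ in any intermediate ring, this will be the main lever. Throughout we will freely use that every element that is integral over $R$ is also almost integral (because the order inclusion $R \subseteq \overline{R}$ has a nonzero conductor), and hence has nonnegative boundary.

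For $(1)\Rightarrow(2)$, fix any $\alpha \in \overline{R}$. The subring $R[\alpha]$ is intermediate, so by hypothesis it is an HFD, and the preceding theorem guarantees that every irreducible of $R[\alpha]$ has boundary $1$. Consequently any factorization $\alpha = \pi_1 \cdots \pi_n$ into irreducibles of $R[\alpha]$ satisfies $\partial_R(\alpha) = \sum_i \partial_R(\pi_i) = n$. Thus $\alpha$ is irreducible in $R[\alpha]$ (i.e.\ has length $1$) if and only if $\partial_R(\alpha) = 1$.

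For $(2)\Rightarrow(1)$, let $T$ be an arbitrary intermediate ring and let $\pi \in \text{Irr}(T)$. By the preceding theorem it suffices to show $\partial_R(\pi) = 1$. Since $\pi$ is a nonunit of $T$, Theorem \ref{bound0} (applied to the intermediate ring $T$) forces $\partial_R(\pi) \geq 1$; suppose for contradiction that $\partial_R(\pi) \geq 2$. By hypothesis (2) applied to $\alpha = \pi$, the element $\pi$ is not irreducible in $R[\pi]$, so we may write $\pi = ab$ with $a, b$ nonunits of $R[\pi]$. Because $\pi$ remains irreducible in the larger ring $T$, one of $a, b$ must become a unit in $T$; say $a \in U(T)$. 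Then $a$ and $a^{-1}$ both lie in $T$, both are almost integral over $R$, so both have nonnegative boundary; since they sum to $\partial_R(1) = 0$, we conclude $\partial_R(a) = 0$. Applying Theorem \ref{bound0} to the intermediate ring $R[\pi]$ then forces $a \in U(R[\pi])$, contradicting our factorization.

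The main subtlety is the final step: we must carefully separate the notions of ``unit in $T$'' and ``unit in $R[\pi]$,'' and bridge them using that in any intermediate ring a boundary-$0$ element is automatically a unit of that ring. Once this bookkeeping is in place, the proof becomes a short application of the preceding theorem of the section in each direction.
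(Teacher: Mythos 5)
Your proof is correct and follows essentially the same route as the paper: both directions reduce to the preceding theorem (equivalently \cite[Theorem 2.5]{Co2}) together with Theorem \ref{bound0}. In fact your treatment of $(2)\Rightarrow(1)$ is slightly more careful than the paper's, since you explicitly justify—via the boundary-$0$/unit bridge—why a factor that becomes a unit in $T$ is already a unit in $R[\pi]$, a step the paper passes over with ``certainly must be irreducible in $R[\pi]$.''
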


\begin{proof}
For $(1)\Longrightarrow(2)$, we select a nonzero, nonunit $\alpha\in\overline{R}$. If $\partial_R(\alpha)=1$ then Theorem \ref{bound0} shows that $\alpha$ is irreducible in $R[\alpha]$. On the other hand if $\alpha$ is irreducible in $R[\alpha]$, the fact that $R[\alpha]$ is an HFD, coupled with \cite[Theorem 2.5]{Co2} shows that $\partial_R(\alpha)=1$.

For the other direction, we assume that we can find an intermediate extension $R\subseteq T\subseteq \overline{R}$ that is not an HFD. Since $T$ is an overring of a $1-$dimensional Noetherian ring, it is Noetherian and hence atomic; combining with Theorem \ref{bound0}, we see that once again, \cite[Theorem 2.5]{Co2} applies. Hence there is an irreducible $\pi\in T$ with the property that $\partial_R(\pi)>1$. Since $\pi$ is irreducible in $T$, it certainly must be irreducible in $R[\pi]$ and so we see $(2)$ does not hold and this completes the proof.
\end{proof}

Here is a result of immediate as well as independent interest. 

\begin{prop}\label{uic}
Let $R$ be an integral domain with integral closure $\overline{R}$ and suppose that $\overline{R}=RU(\overline{R})$. If $I\neq J$ are ideals of $R$ then $I\bigcap R\neq J\bigcap R$.
\end{prop}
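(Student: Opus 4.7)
The statement as displayed appears to have a small typo: for $I, J$ ideals of $R$ themselves, the intersections $I\cap R$ and $J\cap R$ are just $I$ and $J$, so there is nothing to prove. I will read it (as context demands) as the statement that contraction of ideals from $\overline{R}$ to $R$ is injective: if $I\neq J$ are ideals of $\overline{R}$, then $I\cap R\neq J\cap R$.

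The plan is to prove the contrapositive directly by exploiting the hypothesis $\overline{R}=RU(\overline{R})$ to move a given element of $I$ into $R$ via a unit, where the assumed equality of contractions can be applied, and then to move it back into $J$ using another unit. Concretely, I would assume $I\cap R=J\cap R$ and take an arbitrary $x\in I$. By the hypothesis, there exist $r\in R$ and $u\in U(\overline{R})$ with $x=ru$. Then $r=xu^{-1}\in I$ because $I$ is an ideal of $\overline{R}$ and $u^{-1}\in\overline{R}$, so $r\in I\cap R$.

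Now invoke the assumption $I\cap R=J\cap R$: this places $r$ in $J$, and since $J$ is an ideal of $\overline{R}$ and $u\in\overline{R}$, we obtain $x=ru\in J$. Hence $I\subseteq J$, and the symmetric argument gives $J\subseteq I$, proving $I=J$.

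There is essentially no obstacle here; the only thing to be careful about is that the factorization $x=ru$ is used in both directions (once to pull the unit off so as to land in $R$, and once to push it back on to land in $\overline{R}\cdot (I\cap R)\subseteq J$), and that $u^{-1}\in\overline{R}$ so the statement $r=xu^{-1}\in I$ really does use that $I$ is an $\overline{R}$-ideal rather than merely an $R$-ideal. The hypothesis $\overline{R}=RU(\overline{R})$ (which, by the remark following Proposition~\ref{almostintegral}, holds in the orders-in-rings-of-integers situation once one knows there are no boundary-$0$ nonunits) is exactly what is needed to make this argument work.
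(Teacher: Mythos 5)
Your reading of the statement (with $I\neq J$ ideals of $\overline{R}$ rather than of $R$) is exactly what the paper's own proof and its use in Proposition~\ref{CRT} require, and your argument is correct. It is essentially the same as the paper's: the paper argues directly by taking $x\in J\setminus I$ and a unit $u\in U(\overline{R})$ with $ux\in R$, observing that $ux\in(J\cap R)\setminus(I\cap R)$, while you run the identical unit-adjustment argument in contrapositive form.
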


\begin{proof}
Suppose that $\overline{R}=RU(\overline{R})$ and that $I\neq J$ are ideals in $R$. Without loss of generality, we suppose that $x\in J\setminus I$ and find $u\in U(\overline{R})$ such that $ux\in R$. Note that since $ux\notin I$, $I\bigcap R$ and $J\bigcap R$ are distinct.
\end{proof}

\begin{prop}\label{CRT}
Let $R$ be an order with integral closure $\overline{R}$ and conductor $I$. If if $\overline{R}=RU(\overline{R})$ and (as an ideal of ${\overline R}$) $I=P_1^{e_1}P_2^{e_2}\cdots P_k^{e_k}$ then
\[
{\overline R}/I\cong {\overline R}/P_1^{e_1}\times {\overline R}/P_2^{e_2}\times\cdots\times {\overline R}/P_k^{e_k}
\]

\noindent and

\[
R/I\cong R/Q_1\times R/Q_2\times\cdots\times R/Q_k
\]

\noindent with each $Q_i=P_i^{e_i}\bigcap R$. What is more, if $R\subseteq T\subseteq \overline{R}$ and $x\in {\overline R}$ is such that $x\equiv r_i\text{mod}(P_i^{e_i})$ with $r_i\in T$ for all $1\leq i\leq k$ then $x\in T$.
\end{prop}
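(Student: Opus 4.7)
The plan is to prove the three assertions in sequence, with the key input being the injectivity of contraction supplied by Proposition~\ref{uic}. The first decomposition is the Chinese Remainder Theorem in the Dedekind domain $\overline{R}$: because $P_1,\dots,P_k$ are distinct maximal ideals, the powers $P_i^{e_i}$ are pairwise comaximal, so $\overline{R}/I\cong\prod_i\overline{R}/P_i^{e_i}$.

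For the decomposition of $R/I$, set $Q_i=P_i^{e_i}\cap R$ and verify the two Chinese Remainder hypotheses inside $R$. The intersection identity is immediate from the $\overline{R}$-CRT: $\bigcap_i Q_i=(\bigcap_i P_i^{e_i})\cap R=I\cap R=I$. Comaximality of $Q_i$ and $Q_j$ for $i\neq j$ is the delicate step. If a maximal ideal $M$ of $R$ contained $Q_i+Q_j$, then since $r\in P_i$ gives $r^{e_i}\in P_i^{e_i}$, one checks $\sqrt{Q_i}=P_i\cap R$ (and similarly for $Q_j$); both are maximal ideals of $R$ because $R\subseteq\overline{R}$ is integral. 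The prime $M$ would then contain two maximal ideals and so must equal both, forcing $P_i\cap R=P_j\cap R$. Since $P_i$ and $P_j$ are distinct ideals of $\overline{R}$, Proposition~\ref{uic} (which applies as $\overline{R}=RU(\overline{R})$) rules this out. Hence $Q_i+Q_j=R$, and CRT in $R$ yields $R/I\cong\prod_i R/Q_i$.

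For the final assertion, let $\epsilon_i\in R/I$ denote the orthogonal idempotents produced by $R/I\cong\prod_i R/Q_i$. I would observe that, under the injection $R/I\hookrightarrow\overline{R}/I\cong\prod_i\overline{R}/P_i^{e_i}$, the $\epsilon_i$ are sent to the standard orthogonal idempotents of the larger product, because the natural maps $R/Q_i\hookrightarrow\overline{R}/P_i^{e_i}$ assemble coordinate-wise. Given $x\in\overline{R}$ with $x\equiv r_i\pmod{P_i^{e_i}}$ and each $r_i\in T$, the image of $x+I$ in $\overline{R}/I$ is $\sum_i\epsilon_i(r_i+I)$. Each $\epsilon_i\in R/I\subseteq T/I$ and each $r_i+I\in T/I$, so the sum lies in $T/I$, and therefore $x\in T+I=T$ since $I\subseteq R\subseteq T$.

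The main obstacle is the comaximality of the $Q_i$: absent the hypothesis $\overline{R}=RU(\overline{R})$, distinct maximal ideals of $\overline{R}$ may contract to a common maximal ideal of $R$, and the $R/I$ decomposition can genuinely fail. Once that step is secured via Proposition~\ref{uic}, the transfer of idempotents across $R/I\hookrightarrow\overline{R}/I$ is essentially immediate and delivers the last statement.
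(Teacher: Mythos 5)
Your argument is correct, and for the first two isomorphisms it follows the paper's route: CRT in the Dedekind domain $\overline{R}$, then CRT in $R$ after using Proposition~\ref{uic} to separate the contractions $Q_i=P_i^{e_i}\cap R$. In fact you supply more detail than the paper does at the delicate point: the paper simply asserts that distinctness of the $P_i$ forces the $Q_i$ to be ``distinct and comaximal,'' whereas you make the mechanism explicit ($\sqrt{Q_i}=P_i\cap R$ is maximal by integrality, a common maximal ideal over $Q_i+Q_j$ would force $P_i\cap R=P_j\cap R$, contradicting Proposition~\ref{uic} applied to $P_i\neq P_j$), which is exactly the right way to fill that gap. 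Where you genuinely diverge is the final assertion. The paper first treats $T=R$ by choosing $y\in R$ with $y\equiv r_i\ (\mathrm{mod}\ Q_i)$ and comparing coordinates to get $x\equiv y\ (\mathrm{mod}\ I)$, and then reduces a general intermediate $T$ to this case by observing that $T$ again satisfies the hypotheses, with conductor $J=(T:\overline{R})=P_1^{a_1}\cdots P_k^{a_k}$, $a_i\leq e_i$, dividing over the same primes. Your idempotent argument instead works directly inside $\overline{R}/I$: the idempotents $\epsilon_i$ of $R/I$ map to the standard idempotents of $\prod_i\overline{R}/P_i^{e_i}$ (the relevant square commutes since $Q_i\subseteq P_i^{e_i}$), so $x+I=\sum_i\epsilon_i(r_i+I)$ lies in the subring $T/I$, giving $x\in T$ for all intermediate $T$ at once. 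This buys uniformity: you avoid having to verify that $T$ inherits the hypotheses and that its conductor is supported on the same primes, at the cost of introducing the idempotent bookkeeping; the paper's version, conversely, records along the way the mildly useful fact about the shape of $(T:\overline{R})$. Both proofs are complete and rest on the same two pillars, double CRT plus Proposition~\ref{uic}.
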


\begin{proof}
The first isomorphism is merely the Chinese Remainder Theorem. For the second, we observe that Proposition \ref{uic} gives that if $i\neq j$ then $P_i\neq P_j$ from which it follows that the ideals $Q_i$ are all distinct and comaximal. Once again, the Chinese Remainder Theorem applies.

For the last statement, we first consider the case where $T=R$. The coset $x+I\in{\overline R}/I$ can be identified with the element $(r_1+P_1^{e_1},\ldots, r_k+P_k^{e_k})$ of ${\overline R}/I\cong {\overline R}/P_1^{e_1}\times {\overline R}/P_2^{e_2}\times\cdots\times {\overline R}/P_k^{e_k}$. Let $y\in R$ be such that $y+I$ is identified with $(r_1+Q_1,\ldots, r_k+Q_k)$ of $R/I\cong R/Q_1\times R/Q_2\times\cdots\times R/Q_k$. We now suppose that the image of $y+I$ in ${\overline R}/P_1^{e_1}\times {\overline R}/P_2^{e_2}\times\cdots\times {\overline R}/P_k^{e_k}$ is $(y_1+P_1^{e_1},\ldots, y_k+P_k^{e_k})$. Via the canonical embedding $R/I\longrightarrow{\overline R}/I$ and the fact that $Q_i\subseteq P_i^{e_i}$, we obtain that $y_i\equiv r_i\text{mod}P_i^{e_i}$ for all $1\leq i\leq k$. In turn, this gives that $x\equiv y\ \text{mod} I$ and since $I=(R:{\overline R})$ and $y\in R$, we have that $x\in R$ as desired.

In the more general case, we note that as $\overline{R}=RU(\overline{R})$, then it is certainly the case that $\overline{T}=\overline{R}=RU(\overline{R})=TU(\overline{T})$ and so the hypotheses of Proposition \ref{CRT} hold with $J:=(T:\overline{R})$ containing $I$. As an ideal of $\overline{R}$, $J$ factors as $J=P_1^{a_1}P_2^{a_2}\cdots P_k^{a_k}$ with $0\leq a_i\leq e_i$ for all $1\leq i\leq k$. So if $x\equiv r_i\text{mod}(P_i^{e_i})$ with $r_i\in T$ for all $1\leq i\leq k$, then it is certainly true that $x\equiv r_i\text{mod}(P_i^{a_i})$ and hence the previous case shows that $x\in T$.
\end{proof}

\begin{thm}
Let $R$ be an order that is an HFD with integral closure $\overline{R}$. If $(R:\overline{R})$ is a radical ideal and $T$ is an intermediate extension, then $T$ is an HFD.
\end{thm}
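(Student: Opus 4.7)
My plan is to apply Theorem \ref{bandaid}, which reduces the claim to showing that for every $\alpha \in \overline{R}$, irreducibility of $\alpha$ in $R[\alpha]$ is equivalent to $\partial_R(\alpha) = 1$. The direction $\partial_R(\alpha)=1 \Rightarrow$ irreducible follows from Theorem \ref{bound0}, so I only have to handle the contrapositive of the other direction: if $\alpha \in \overline{R}$ has $\partial_R(\alpha) \geq 2$, then $\alpha$ is reducible in $R[\alpha]$. Fix such an $\alpha$. Since $\overline{R}$ is an HFD (\cite{Co2}) with no boundary-zero nonunit (Theorem \ref{bound0}), Proposition \ref{almostintegral}(2) forces every $\overline{R}$-irreducible to have boundary one. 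Write $\alpha = \pi_1 \pi_2 \cdots \pi_n$ in $\overline{R}$ with $n \geq 2$, set $\gamma_0 := \pi_1$ and $\delta_0 := \pi_2 \cdots \pi_n$, and aim to produce $v \in U(\overline{R})$ with $v\gamma_0, v^{-1}\delta_0 \in R[\alpha]$; such a $v$ delivers the nontrivial factorization $\alpha = (v\gamma_0)(v^{-1}\delta_0)$ in $R[\alpha]$, whose factors are nonunits since each has positive $\partial_R$-value in $\overline{R}$ (and a unit in $R[\alpha]$ is automatically a unit in $\overline{R}$).

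Write $I = P_1 P_2 \cdots P_m$ with distinct prime ideals $P_i$ of $\overline{R}$, using that $I$ is radical. Combining Proposition \ref{almostintegral}(4) with Theorem \ref{bound0} gives $\overline{R} = R\, U(\overline{R})$, so Proposition \ref{CRT} yields the description
\[
R[\alpha] = \{\, x \in \overline{R} : \overline{x}_i \in L_i \text{ for each } i \,\},
\]
where $F_i := \overline{R}/P_i$ is a finite field, $Q_i := P_i \cap R$, and $L_i := (R/Q_i)[\overline{\alpha}_i] \subseteq F_i$ is the subfield generated by $R/Q_i$ and the image $\overline{\alpha}_i$ of $\alpha$. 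The two conditions $v\gamma_0, v^{-1}\delta_0 \in R[\alpha]$ become, componentwise, $\overline{v}_i\, \overline{\gamma_0}_i \in L_i$ and $(\overline{v}_i)^{-1}\, \overline{\delta_0}_i \in L_i$. A short case analysis on whether $\overline{\alpha}_i$ vanishes in $F_i$ shows that the admissible $\overline{v}_i$ form a nonempty subset $T_i \subseteq F_i^\times$ equal either to $F_i^\times$ itself or to a single coset of $L_i^\times$ in $F_i^\times$; in the nonvanishing case the coset conditions from $\gamma_0$ and $\delta_0$ coincide because $\overline{\alpha}_i = \overline{\gamma_0}_i\, \overline{\delta_0}_i$ is a nonzero element of $L_i$, and in the vanishing case (where $L_i = R/Q_i$) one of $\overline{\gamma_0}_i, \overline{\delta_0}_i$ is already zero, so its constraint is automatic. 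Since $L_i^\times \supseteq (R/Q_i)^\times$, each $T_i$ is a union of cosets of $(R/Q_i)^\times$ in $F_i^\times$.

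All that remains is to realize some point of $\prod T_i$ as the image of an actual unit of $\overline{R}$. Let $U_I$ denote the image of $U(\overline{R})$ in $(\overline{R}/I)^\times = \prod F_i^\times$; I will verify that $(\overline{R}/I)^\times = (R/I)^\times \cdot U_I$. Finiteness of $R/I$ is the key: if $r \in R$ becomes a unit mod $I$ in $\overline{R}$, then multiplication by $r$ is a bijection on $\overline{R}/I$ and restricts to an injection on $R/I$, which is a bijection by finiteness, so $r$ is already a unit in $R/I$. Lifting any $\overline{y} \in (\overline{R}/I)^\times$ via $\overline{R} = R\, U(\overline{R})$ to $y = ru$ then expresses $\overline{y} = \overline{r}\, \overline{u}$ with $\overline{r} \in (R/I)^\times$ and $\overline{u} \in U_I$. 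Consequently $U_I$ meets every coset of $(R/I)^\times = \prod (R/Q_i)^\times$ in $(\overline{R}/I)^\times$, and therefore meets $\prod T_i$, producing the desired $v$. The main obstacle is the bookkeeping in the case analysis so that each $T_i$ is presentable as a union of cosets of $(R/Q_i)^\times$, and radicality of $I$ is indispensable precisely here: it guarantees that each quotient $\overline{R}/P_i$ is a field, so that the multiplicative coset structure I exploit is actually available.
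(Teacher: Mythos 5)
Your argument is correct, and although it follows the paper's skeleton---reduce via Theorem \ref{bandaid} (together with Theorem \ref{bound0}) to showing that $\partial_R(\alpha)\geq 2$ forces $\alpha$ to split in $R[\alpha]$, then fix up a two-factor splitting by a unit of $\overline{R}$ found through congruences modulo the conductor primes---the mechanism producing the unit is genuinely different. The paper normalizes one factor $y$ into $R$ (using $\overline{R}=RU(\overline{R})$), builds an explicit auxiliary element $x'$ by the Chinese Remainder Theorem ($x'\equiv x$ at the primes where $y$ vanishes and $\equiv 1$ elsewhere), chooses $u$ with $ux'\in R$ via Proposition \ref{almostintegral}, and then verifies prime-by-prime that $ux$ and $u^{-1}y$ are congruent to elements of $R[xy]$, finishing with Proposition \ref{CRT}. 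You instead describe $R[\alpha]$ once and for all as the pullback of the subfields $L_i=(R/Q_i)[\overline{\alpha}_i]\subseteq\overline{R}/P_i$, convert the requirement on $v$ into a coset condition in each $(\overline{R}/P_i)^{\times}$ (the two conditions at a given prime agreeing because $\overline{\alpha}_i\in L_i$, which is also why you can skip normalizing $y$ into $R$), and then conclude from the standalone lemma that the image of $U(\overline{R})$ in $(\overline{R}/I)^{\times}$ meets every coset of $(R/I)^{\times}$---a consequence of $\overline{R}=RU(\overline{R})$ plus finiteness of $R/I$. The paper's version is more hands-on and needs no group-theoretic setup; yours isolates a reusable structural fact about units modulo the conductor and makes completely transparent where radicality enters (each $\overline{R}/P_i$ is a field, so $R[\alpha]$ reduces to a subfield at every conductor prime and the coset bookkeeping is available). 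One small remark: condition (2) of Theorem \ref{bandaid} also requires ruling out irreducibility when $\partial_R(\alpha)=0$, but that is immediate from Theorem \ref{bound0} (such an $\alpha$ is a unit of $R[\alpha]$), exactly as in the paper.
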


\begin{proof}
Suppose that we have $R\subseteq T\subseteq \overline{R}$. As $R$ is an order in a ring of algebraic integers, $T$ must be atomic (see, for example \cite{CGH2023}) and we showed in the previous section that there are no nonunits of $T$ of boundary $0$. So by \cite[Theorem 2.5]{Co2} it suffices to show in $T$ that every irreducible is of boundary $1$ (with respect to $R$).

To this end, we suppose that $I=(\overline{R}:R)$ is a radical ideal; we factor $I=P_1P_2\cdots P_k$ into a product of distinct prime ideals of $\overline{R}$.

By Theorem \ref{bandaid} it suffices to show that if $\partial_R(\alpha)>1$ then $\alpha$ is reducible in $R[\alpha]$. To this end, we take $\alpha=xy$ with $x,y\in\overline{R}$ and $\partial_R(x)$ and $\partial_R(y)$ both strictly positive. Since $\overline{R}=RU(\overline{R})$, we can rearrange units so that $y\in R\subseteq R[\alpha]$.

By Proposition \ref{CRT}, we have that

\[
\overline{R}/I\cong \overline{R}/P_1\times\overline{R}/P_2\times\cdots\times\overline{R}/P_k.
\]

We now consider the image of $x$ in each $\overline{R}/P_i$, and choose an element of $\overline{R}$ using the Chinese Remainder Theorem as follows. For each prime ideal $P_i$ choose \[
x_i=\begin{cases}
x,\text{ if }y\equiv 0\ \text{mod}(P_i), \\
1, \text{\ otherwise}
\end{cases}
\]
 and select an element $x'\in\overline{R}$ that is a solution to this family of congruences. From Proposition \ref{almostintegral} we can find a unit $u\in U(\overline{R})$ such that $ux'\in R$.
 
 We now claim that both $ux$ and $u^{-1}y$ are elements of $R[xy]$. By Proposition \ref{CRT} we merely need to show that for all $1\leq i\leq k$ both $ux$ and $u^{-1}y$ are equivalent to an element of $R[xy]$ $\text{mod} P_i$. 
 
 For $ux$ our claim certainly holds if $x\equiv 0\ \text{mod} P_i$, so we suppose that $x\not\equiv 0\ \text{mod} P_i$. In this case, if $y\equiv 0\text{mod} P_i$ then by the construction of $x'$, we have that $u\equiv x^{-1}r\text{mod} P_i$ for some $r\in R$ and so $ux\equiv r\text{mod} P_i$. On the other hand, if $y\not\equiv 0\text{mod} P_i$ then $u\equiv r\text{mod} P_i$ and $xy\equiv r'\text{mod} P_i$. Since $y,r'$ are equivalent to units $\text{mod} P_i$, we have that $ux\equiv ur'y^{-1}\text{mod} P_i$. We also note that $u, y$ are units in $R$ $\text{mod}(P_i\bigcap R)$ and $r'\in R[xy]\text{mod}(P_i\bigcap R)$. So in all subcases, $ux\in R[xy]\text{mod}(P_i\bigcap R)$ and this verifies the claim for $ux$.
 
 For $u^{-1}y$ we first note that, as above, we have our desired result if $y\equiv 0\text{mod} P_i$. If $y\not\equiv 0\text{mod} P_i$, then we again consider two cases. In the first case, we assume that $x\equiv 0\text{mod} P_i$; in this case, we note that $x'=1$ and so $u$ and hence $u^{-1}$ is equivalent to an element of $R$ and so $u^{-1}y$ is equivalent to an element of $R$. In the second case, we assume that $x\not\equiv 0\text{mod}(P_i)$, which is the analog of the last case in the previous paragraph. In that case, we observed that $y$ is equivalent to a unit $\text{mod}(P_i\bigcap R)$ and hence $u^{-1}y$ is equivalent to an element of $R$ $\text{mod}(P_i\bigcap R)$.

So in all cases, we have that $ux, u^{-1}y$ are equivalent to an element in $R[xy]$ $\text{mod}(P_i\bigcap R)$ for all $i$ and so by Proposition \ref{CRT}, we have that $ux, u^{-1}y\in R[\alpha]$ and hence $\alpha=(ux)(u^{-1}y)$ reduces in $R[\alpha]$. We conclude that any intermediate ring is an HFD.
\end{proof}

\begin{comment}

\begin{thm}
Let $R$ be an order that is an HFD with integral closure $\overline{R}$. If $(R:\overline{R})$ is not a radical ideal then there is an intermediate extension $T$, such that $T$ is not an HFD.
\end{thm}

\begin{conj}
If $R\subseteq T$ is an integral extension of HFDs with $T$ an overring of $R$, then $(R:T)$ is a radical ideal.
\end{conj}

\begin{conj}
For integral extensions of overrings $R\subseteq A\subseteq T$, we have that $\rho(R)\geq\rho(A)\geq\rho(T)$.
\end{conj}
\end{comment}

\bibliography{biblio2}{}

\begin{thebibliography}{10}

\bibitem{AAZ}
D.D. Anderson, Anderson D.F., and M.~Zafrullah.
\newblock Factorization in integral domains.
\newblock {\em J. Pure Appl. Algebra}, 69:1--19, 1990.

\bibitem{carlitz}
L.~Carlitz.
\newblock A characterization of algebraic number fields with class number two.
\newblock {\em Proc. Amer. Math. Soc.}, 11:391--392, 1960.

\bibitem{hcohn}
Harvey Cohn.
\newblock {\em Advanced number theory}.
\newblock Dover Publications, Inc., New York, 1980.
\newblock Reprint of {{\i}t A second course in number theory}, 1962, Dover
  Books on Advanced Mathematics.

\bibitem{Co1}
Jim Coykendall.
\newblock A characterization of polynomial rings with the half-factorial
  property.
\newblock {\em Lecture Notes in Pure and Applied Mathematics}, pages 291--294,
  1997.

\bibitem{Co2}
Jim Coykendall.
\newblock The half-factorial property in integral extensions.
\newblock {\em Comm. Algebra}, 27(7):3153--3159, 1999.
\newblock \href {https://doi.org/10.1080/00927879908826618}
  {\path{doi:10.1080/00927879908826618}}.

\bibitem{Co4}
Jim Coykendall.
\newblock On the integral closure of a half-factorial domain.
\newblock {\em J. Pure Appl. Algebra}, 180(1-2):25--34, 2003.
\newblock \href {https://doi.org/10.1016/S0022-4049(02)00303-1}
  {\path{doi:10.1016/S0022-4049(02)00303-1}}.

\bibitem{Co2005}
Jim Coykendall.
\newblock Extensions of half-factorial domains: a survey.
\newblock In {\em Arithmetical properties of commutative rings and monoids},
  volume 241 of {\em Lect. Notes Pure Appl. Math.}, pages 46--70. Chapman \&
  Hall/CRC, Boca Raton, FL, 2005.
\newblock \href {https://doi.org/10.1201/9781420028249.ch3}
  {\path{doi:10.1201/9781420028249.ch3}}.

\bibitem{CGH2023}
Jim Coykendall, Felix Gotti, and Richard~Erwin Hasenauer.
\newblock Hereditary atomicity in integral domains.
\newblock {\em J. Pure Appl. Algebra}, 227(4):Paper No. 107249, 17, 2023.
\newblock \href {https://doi.org/10.1016/j.jpaa.2022.107249}
  {\path{doi:10.1016/j.jpaa.2022.107249}}.

\bibitem{GP2005}
Nathalie Gonzalez and S\'ebastien Pellerin.
\newblock Boundary map and overrings of half-factorial domains.
\newblock {\em Boll. Unione Mat. Ital. Sez. B Artic. Ric. Mat. (8)},
  8(1):173--185, 2005.

\bibitem{MO2016}
Peter Malcolmson and Frank Okoh.
\newblock Half-factorial subrings of factorial domains.
\newblock {\em J. Pure Appl. Algebra}, 220(3):877--891, 2016.
\newblock \href {https://doi.org/10.1016/j.jpaa.2015.06.011}
  {\path{doi:10.1016/j.jpaa.2015.06.011}}.

\bibitem{M04}
Jack Maney.
\newblock Boundary valuation domains.
\newblock {\em J. Algebra}, 273(1):373--383, 2004.
\newblock \href {https://doi.org/10.1016/S0021-8693(03)00433-2}
  {\path{doi:10.1016/S0021-8693(03)00433-2}}.

\bibitem{M2006}
Jack Maney.
\newblock On the boundary map and overrings.
\newblock {\em Houston J. Math.}, 32(2):325--335, 2006.

\bibitem{MolesPhD}
Grant Moles.
\newblock Relating elasticity and other multiplicative properties among orders
  in number fields and related rings., 2024.
\newblock Thesis (Ph.D.)--Clemson University.
\newblock URL: \url{https://open.clemson.edu/all_dissertations/3750}.

\bibitem{R25}
B.~Rago.
\newblock A characterization of half-factorial orders in algebraic number
  fields.
\newblock {\em Acta Arith.}, 217:1--17, 2025.
\newblock \href {https://doi.org/10.4064/aa230417-23-8}
  {\path{doi:10.4064/aa230417-23-8}}.

\bibitem{R3}
Moshe Roitman.
\newblock A quasi-local half-factorial domain with an atomic non-half-factorial
  integral closure.
\newblock {\em J. Commut. Algebra}, 3:431--438, 2011.

\bibitem{Z}
Abraham Zaks.
\newblock Half factorial domains.
\newblock {\em Bulletin of the American Mathematical Society}, 82(5):721--723,
  1976.

\bibitem{Z1980}
Abraham Zaks.
\newblock Half-factorial-domains.
\newblock {\em Israel J. Math.}, 37(4):281--302, 1980.
\newblock \href {https://doi.org/10.1007/BF02788927}
  {\path{doi:10.1007/BF02788927}}.

\end{thebibliography}
\bibliographystyle{plainurl}

\end{document}